\newcommand{\beq}{\begin{equation}}
\newcommand{\eeq}{\end{equation}}
\newcommand{\D}{\displaystyle}
\newcommand{\bx}{\bm{x}}
\newcommand{\be}{\bm{e}}
\newcommand{\by}{\bm{y}}
\newcommand{\bs}{\bm{s}}
\newcommand{\al}{\alpha}
\newcommand{\ep}{\epsilon}
\newcommand{\bfb}{{\bm{b}}}
\newcommand{\bff}{{\bm{f}}}
\newcommand{\bfu}{{\bm{ u}}}
\newcommand{\mcL}{{\mathcal{L}}}
\newcommand{\mcA}{{\mathcal{A}}}
\newcommand{\mcD}{{\mathcal{D}}}
\newcommand{\mcG}{{\mathcal{G}}}
\newcommand{\cS}{{\mathcal{S}}}
\newcommand{\cV}{{\mathcal{V}}}
\newcommand{\del}{\delta}
\newcommand{\mathd}{\mathrm{d}}
\newcommand{\vomega}{\vec{\omega}}
\newcommand{\vod}{\vomega_\delta}
\newcommand{\omd}{\underline{\omega}_\delta}
\newcommand{\hod}{\hat{\omega}_\delta}
\newcommand{\R}{\mathbb{R}}
\newcommand{\Z}{\mathbb{Z}}
\newcommand{\Od}{{\Omega}}
\newcommand{\Om}{{\Omega}}
\newtheorem{thm}{Theorem}
\newtheorem{prop}[thm]{Proposition}
\newtheorem{coro}[thm]{Corollary}
\newtheorem{lem}[thm]{Lemma}
\newtheorem{remark}[thm]{\textit{Remark}}
\newtheorem{assu}[thm]{Assumption}
\begin{document}

\title{Mathematics of Smoothed Particle Hydrodynamics: a Study via Nonlocal Stokes Equations
\thanks{To appear in Foundation of Computational Mathematics, 2019.\quad This work is supported in part by the U.S.~NSF grants
DMS-1719699 and DMS-1819233,  AFOSR MURI center for material failure prediction through peridynamics, and ARO MURI Grant W911NF-15-1-0562.
}}

\author{ Qiang Du\thanks{Department of Applied Physics and Applied Mathematics, 
Columbia University, New York, NY 10027,    \href{mailto:qd2125@columbia.edu}{qd2125@columbia.edu}}
 \and Xiaochuan Tian\thanks{Department of Mathematics, University of Texas, Austin,
TX 78712,  \href{mailto:xtian@math,utexas.edu}{xtian@math,utexas.edu}
}}

\markboth{NONLOCAL STOKES EQUATION}{QIANG DU AND XIAOCHUAN TIAN}

%\vspace{-1.5in}
\date{}%Received: 7 August  2017 /  Accepted: 1 July 2019}

\maketitle

\begin{abstract}
%\vspace{-1.in}
Smoothed Particle Hydrodynamics (SPH) is a popular numerical technique developed for simulating
complex fluid flows. Among its key ingredients is the use of nonlocal integral  relaxations to local differentiations.
Mathematical analysis of the corresponding nonlocal models on the continuum level can 
provide further theoretical understanding of SPH.
We present, in this part of a series of works on the mathematics of SPH,
a nonlocal relaxation to the conventional linear steady state Stokes system 
for incompressible viscous flows. The nonlocal continuum model is
characterized by a smoothing length $\delta$ which measures the range of nonlocal interactions.
It serves as a bridge between the discrete approximation schemes that involve a nonlocal
integral relaxation and the local continuum models.
We show that for a class of carefully chosen nonlocal operators, the resulting
nonlocal Stokes equation is well-posed
and recovers the original Stokes equation in the local limit when
$\delta$ approaches zero. {For some other commonly used smooth kernels, there are risks in
getting ill-posed continuum models that could lead to computational difficulties in 
practice. }  This leads us to discuss the implications of our finding on
the design of numerical methods.\\[.2cm]
AMS (MOS) SUBJECT CLASSIFICATION: 45P05,\  45A05,\  35A23,\ 
 46E35
\end{abstract}

\begin{keywords}
nonlocal Stokes equation, nonlocal operators,  Smoothed Particle Hydrodynamics, peridynamics, incompressible flows,
stability and convergence.
\end{keywords}

%%%%%%%%%%%%%%%%%%%%%%%%%%%%%%%%%%%%%%%%%%%%%%%%%%%%%%%%%%%%%%%
%%%%%%%%%%%%%%%%%%%%%%%%%%%%%%%%%%%%%%%%%%%%%%%%%%%%%%%%%%%%%%%
%%%%%%%%%%%%%%%%%%%%%%%%%%%%%%%%%%%%%%%%%%%%%%%%%%%%%%%%%%%%%%%
\section{Introduction}

There has been much recent interest in nonlocal continuum models. In solid mechanics,
the theory of peridynamics \cite{sill00}
was proposed as a possible  alternative to conventional models of elasticity and fracture mechanics.
It has also been shown to be an integral relaxation to the
conventional models when the latter are valid such as the case of linear elasticity.
Mathematical and numerical analysis of peridynamics have provided a solid theoretical foundation to {nonlocal mechanical models and their numerical approximations} \cite{du12,MeDu15a,TiDu14}.
 In this work, we are interested in 
extending such mathematical studies to problems in fluid mechanics. Indeed, nonlocal integral relaxations are naturally linked to numerical schemes developed
for simulating fluid flows such as the smoothed particle hydrodynamics (SPH) \cite{GiMo77,liu2010,lucy77,mono95}
vortex  methods \cite{BM85,CK00}  and others \cite{Cher17,AB00,ELC02,Koum05,TE04}. 
While developed originally for astrophysical applications, SPH has become
a very popular computational technique for simulating complex flows, including both compressible and incompressible
flows, meanwhile, it has also encountered issues like the loss of stability and lack of resolution.
The mathematical analysis of SPH remains limited today except \cite{mv00,ld19}. 
We present, as part of a series of investigations on the mathematics of SPH, 
a nonlocal relaxation to the conventional linear steady state Stokes system 
for incompressible viscous flows, with the aim of providing further insight into the theoretical foundation of methods like SPH.
The  proposed nonlocal models serve as bridges linking SPH with the local differential equation models and
 allow us to delineate
effects resulted from the different aspects of the approximation process. This adds new angle to the subject that has not
been systematically explored in the literature before.
The nonlocal equation studied here is also different from but closely connected to other nonlocal and fractional fluid models
such as those considered for geostrophic 
flows \cite{con06,con08} and hyper-dissipative models
\cite{katz02,tao} as well as hydrodynamic limit of kinetic models \cite{tadmor14}. 

One of the main contributions of  this work is to formulate a well-posed nonlocal analog of the linear Stokes system (in two and three space
dimensions).  In general, 
a spatially nonlocal model may depend on the laws of nonlocal interactions specified in the bulk spatial region and necessary modifications near
the boundary involving possible boundary conditions or nonlocal constraints \cite{du12}.
As a first step, we will be focusing on the bulk nonlocal interactions
 in this work, namely, finding suitable nonlocal relaxations of the first and second order differential operators  used in the local Stokes equation
such that the resulting nonlocal Stokes equation remains well-posed  and retains a consistent local limit.
To that end, we only consider periodic boundary conditions to avoid the discussion near physical boundary.  
Such a simplification allows us to use Fourier analysis to carry out much needed technical derivations. 
Though the analysis would work in any space dimension, our attention is on two and three dimensional spaces
for physical relevance and this also complements a similar discussion in  the 
one dimensional case presented in \cite{dt17} for the correspondence model of peridynamic materials.
The nonlocal analogs of differential operators adopted in this paper are basic elements of nonlocal vector calculus
introduced in \cite{du13} and have been successfully applied to nonlocal modeling and analysis \cite{du12,MeDu15}. 
Our main finding in this work is that %while the nonlocal interaction kernel in the diffusion operator can be very general, 
the choices of the nonlocal gradient and divergence operators are more subtle issues that need more careful treatment. 
More specifically, we reveal that nonlocal interaction kernels associated with nonlocal gradient and divergence operators should have suitably strengthened interactions as particles (materials points) get close together, {which in this work is phrased as {\it strong nearby interactions} for easy reference}. Many popular choices used in
practical implementation of SPH, however, do not yield such type of interactions,  {thus leading to possibly ill-posed problems on the continuum level in the smoothing step.  Although suitable numerical discretization may add
regularization effect to help 
alleviating the impact of any intrinsic ill-posedness, it is expected that these numerical regularization effect would be highly dependent on the resolution level (like particle distribution). Such speculations would require further theoretical investigation. Nevertheless, 
by revealing potential flaws in the key smoothing step for developing a robust and practical SPH methodology, our analysis provides strong links between the popular SPH discretization and a new mathematical foundation of nonlocal operators.}

{To further highlight the bridging role of the nonlocal models in the analysis of SPH type methods and to show that 
the ill-posedness of nonlocal models can be avoided with carefully designed nonlocal operators, 
we present examples of well-posed nonlocal Stokes models involving nonlocal gradient and divergence operators with suitably strengthened nearby interactions. For these nonlocal models, we actually show that the spaces of nonlocal and local divergence-free vector fields  coincide, a nice property-preserving feature of the nonlocal relaxation.
We also get, as a byproduct, that using the nonlocal kernels under consideration, the nonlocal Laplacian in terms of the composition of nonlocal
gradient and divergence operators is a well-defined and invertible operator which may be used in either the nonlocal Stokes system or a scalar Poisson equation. The latter is often used to do the pressure correction for maintaining incompressibility
in practical SPH implementation.  With well-posed nonlocal Stokes
equation, to make further connections with the second step of discretizing the nonlocal operators in SPH, 
 we deduce}
the convergence and the asymptotic compatibility of the Fourier spectral approximations. The latter is
natural due to the special periodic setting. {Moreover,  the mathematical findings in this special case demonstrate the possibility  that, once well-posed nonlocal continuum models can be constructed from the smoothing step, it is possible to develop robust discretization
that can maintain the convergence in different parameter regimes (for example, either for a fixed smoothing length $\delta>0$ 
or for $\delta \to 0$, as the numerical resolution improves}. It also
serves as a hint for future analysis of collocation and mesh-free methods like SPH
that are originally designed for more complex geometric settings. {Indeed, one may design other remedies
such as using nonlocal gradient operators with biased nonlocal interactions (similar in spirit to upwind differences 
\cite{dlee19}) or a formulation involving artificial compressibility
\cite{zds19},  The latter two subjects are explored in separate works.}
In short, our study here
further illustrates that
the mathematics developed for nonlocal continuum models may offer foundation for a better 
understanding of related numerical discretizations. 

The rest of the paper is organized as follows. We begin by formulating the nonlocal
 linear Stokes system in section \ref{sec:nonlocal-stokes}, in which the  notation and assumptions are introduced. 
 We then establish the well-posedness of the resulting nonlocal
model in section \ref{sec:wellpose} and the connection to the usual local Stokes system in section \ref{sec:vanish}. 
In section \ref{sec:num}, we discuss the Fourier spectral methods and related convergence issues. We conclude in section \ref{sec:discu} 
with a summary and a discussion on ongoing and future research. 

%%%%%%%%%%%%%%%%%%%%%%%%%%%%%%%%%%%%%%%%%%%%%%%%%%%%%%%%%%%%%%%
%%%%%%%%%%%%%%%%%%%%%%%%%%%%%%%%%%%%%%%%%%%%%%%%%%%%%%%%%%%%%%%
%%%%%%%%%%%%%%%%%%%%%%%%%%%%%%%%%%%%%%%%%%%%%%%%%%%%%%%%%%%%%%%

\section{Nonlocal Stokes system}
\label{sec:nonlocal-stokes}

Let us first recall the conventional, local Stokes equation.
Let $\bfu$ be the velocity field, $p$ the pressure, $\bff$ the body force,  $\nu$ the given viscosity
coefficient and a bounded domain $\Omega\subset \mathbb{R}^n$ with a smooth boundary $\partial\Omega$,  
 the conventional, local Stokes 
equation of interests here refers to the system 
\begin{align}
\label{eq:stokes}
\left\{\begin{array}{rcl}
\D  - \nu  \Delta \bfu + \nabla p &= & \bff\,,\quad\text{in}\;\; \Omega\\[.2cm]
\D -\nabla \cdot \bfu & = &0\,,\quad \text{in}\;\; \Omega\,.
\end{array}\right.
\end{align}
%For simplicity, we consider \eqref{eq:stokes} with the usual no-slip boundary  condition
%\begin{align}
%  \label{eq:bc}
%\bfu=0,\quad\quad \text{on}\;\; \p\Omega \;.
%\end{align}
The linear Stokes equation  \eqref{eq:stokes} serves as a simplification of the stationary
 and time-dependent Navier-Stokes equations which are
among  the most widely studied mathematical models for fluid flows.
In recent years,  there have been much interests in various generalizations and
relaxations of the linear Stokes and nonlinear Navier-Stokes systems, in particular,
those involving nonlocal operators. We refer to earlier mentioned examples like gestrophic 
flows \cite{con06,con08}, hyper-dissipative models
\cite{katz02,tao} and
 hydrodynamic limit of kinetic models \cite{tadmor14}.
 Of particular interests to us is the
nonlocal relaxation used in the SPH for the simulation of complex flows.
While developed for astrophysical applications, SPH for incompressible viscous flows has also been
a subject of ongoing study \cite{antu12,eller07,hu07,hu09,Lee08,nair15,poz02,SL03}.
 Despite broad applications and much 
progress in the algorithmic development effort, there has not much rigorous examination on the 
underlying relaxed nonlocal continuum models which could serve as bridges between 
local continuum models and their numerical
discretizations.

We begin by focusing on a linear nonlocal system defined on the periodic cell given 
by $\Od=(-\pi,\pi)^d\subset\R^d$ in dimension $d=2$ or $3$.
Given a parameter small parameter $\delta>0$ representing the nonlocal interaction length,
we define the nonlocal Stokes equation as: for a given periodic function $\bff$ on $\Od$, find
periodic functions $\bfu_\delta$ and $p_\delta$ such that
\begin{align}
  \label{eq:stokes-nonlocal}
\left\{\begin{array}{l}
\D  - \nu \mcL_\delta \bfu_\delta (\bx) + \mcG_\delta p_\delta (\bx) = \bff(\bx), \qquad \bx\in \Om, \\[.2cm]
\D - \mcD_\delta \bfu_\delta(\bx) =0,  \qquad \bx\in \Om, 
\end{array}\right.
\end{align}
with normalization conditions on  $\bfu_\delta$ and $p_\delta$ to eliminate constant shifts, and on $\bff$ to assure compatibility
\begin{align}
\label{eq:constraint}
\int_{\Om} \bfu_\delta(\bx) d \bx=0,\qquad
\D \int_{\Om}p_\delta(\bx)\mathd\bx=0,
\qquad  \int_{\Om} \bff(\bx) d \bx=0
\,.
\end{align}

The nonlocal operators used there are the nonlocal diffusion operator $\mcL_\delta$, nonlocal gradient operator
$\mcG_\delta$ and nonlocal divergence operator $\mcD_\delta$ given respectively by
\begin{align}
  \label{eq:op-nonlocal}
  \D
  \mcL_\delta \bfu (\bx) = 
\int_{\R^d} {\omd(|\bx-\by|)}(\bfu(\by)-\bfu(\bx))\mathd \by,\\
\D  \mcG_\delta p (\bx) =   \int_{\R^d} \vod(\by-\bx) (p(\by)-p(\bx))\mathd \by\\
\D  \mcD_\delta \bfu (\bx) = \int_{\R^d}  \vod(\by-\bx)\cdot (\bfu(\by)+\bfu(\bx))\mathd \by\,.
\end{align}
The above operators are determined by a nonlocal scalar-valued kernel $\omd$ and a
vector-valued kernel  $\vod$. In this work, we take a special
form $\vod(\bx)=\hod(|\bx|)\bx/|\bx|$, and for the moment, we assume that 
$\omd$ and $\hod$ both are nonnegative, radial symmetric and 
with a  compact support in the $\delta$ neighborhood $B(\bf{0},\delta)$ of the origin.
Here, $\delta$ is a parameter that characterizes the range of nonlocal interaction. It is
called a {\em nonlocal horizon parameter} in peridynamics, following \cite{sill00}, but in SPH, it is more commonly
called the {\em smoothing length}.
While more specific assumptions
on the nonlocal kernels are given later, we note that these operators have been studied extensively
in recent years, see for instance, \cite{du13,MeDu14e,MeDu14r,MeDu15} for more discussions and
generalizations. The development of these operators, and in particular, their vector and tensor forms, has
been motivated by the mathematical analysis of peridynamics {and other nonlocal  integral equation
models}, though the connection to SPH has also
been alluded to previously \cite{dlt15}. In more general mathematical context, 
these nonlocal operators serve as  the nonlocal analog of the classical diffusion, gradient and divergence operators,
upon properly choosing the nonlocal kernels, and 
they form part of the building blocks of the nonlocal vector calculus, together with relevant integral identities. Indeed,
by a nonlocal integration by parts formula (see \cite[Theorem 2.7]{MeDu15}), we have $\mcG_\delta$ and $\mcD_\delta$ 
to be adjoint operators of each other, in the sense that, for functions in 
the suitable spaces (and periodic in our context),
\begin{align}
  \label{eq:adjoint}
\int_{\Od} \bfu (\bx) \cdot \mcG_\delta p (\bx) d\bx = - \int_{\Od}   \mcD_\delta \bfu (\bx)  p (\bx) d\bx \,,
\end{align}
 just like the conventional gradient and divergence operators.  {In addition, the expression $\bfu(\by)+\bfu(\bx)$ in
 the definition of $  \mcD_\delta \bfu$ 
 in \eqref{eq:op-nonlocal} can be used interchangeably with $\bfu(\by)- \bfu(\bx)$, although the plus sign is
 preferred in the more general function class over a bounded domain $\Od$ to have \eqref{eq:adjoint} satisfied.}
 
 We note that 
 integral relaxations to differential operators 
  have also been discussed in many works related to the SPH methods as mentioned earlier,
   except that they are more often given by approximate
  quadrature forms
 in disguise.  In the SPH community, 
 $\delta$ is called the smoothing length, thus we refer $\delta$ as either horizon parameter or
 smoothing length interchangeably in this work.
 
 Nonlocal operators such as $\mcL_\delta$  can also be seen as
continuum forms of popular discrete and graph Laplacians (see e.g. \cite{Hein05,Shi17,diffusion-map}).
The study on $\mcG_\delta$ and $\mcD_\delta$, as explained in this work, bears even greater significance for the nonlocal
Stokes equation.  Such a study is also related to the so-called correspondence
theory of peridynamics, see a recent study in \cite{dt17}.

 Now we specify conditions on the kernels $\omega_\del$ and $\hat\omega_\del$ used in the definition of the nonlocal diffusion operator $\mcL_\del$, and
 the nonlocal gradient and
 divergence operators $\mcG_\delta$ and $\mcD_\delta$.

\begin{assu}
\label{assumption}    The nonnegative and radial symmetric kernels $\omd=\omd(|\bx|)$ and $\hod=\hod(|\bx|)$ are assumed to satisfy
the following assumptions.

\begin{enumerate}

\item  The kernels $\omd=\omd(|\bx|)$ and $\hod=\hod(|\bx|)$ have compact support in the sphere $B(\bf{0},\delta)$ and satisfy the normalization conditions:
\begin{align}
\label{omdmnt}
 \frac{1}{2} \int_{\mathbb{R}^n} 
\omd (|\bx|)  |\bx|^2 \mathd \bx = d\,.
 \end{align}
and
 \begin{align}
\label{hodmnt}
  \int_{\mathbb{R}^n} 
\hod (|\bx|)  |\bx| \mathd \bx = d.
 \end{align}

\item The kernels $\omd$ and $\hod$ are rescaled from kernels $\omega$ and $\hat\omega$ that have compact support in the unit sphere:
\begin{align} \label{kernel_rescale}
\left\{\begin{array}{c}
 \D \omd(|\bx|)= \frac{ 1 }{\delta^{d+2}}\omega\left(\frac{|\bx|}{\delta}\right), \\[.2cm]
  \D
  \hod(\bx) =   \frac{ 1 }{\delta^{d+1}}\hat\omega\left(\frac{|\bx|}{\delta}\right).
 \end{array}\right.
 \end{align}
\end{enumerate}

\end{assu}

The above conditions are often made in earlier studies on the nonlocal operators and are also
default assumptions throughout this paper.  {In the literature on SPH and vortex-blob  methods, these
moment conditions have also been used as well, see, e.g., \cite{CK00}.}
 It turns out that,  as shown
in the next section, additional conditions on the kernels, particularly on
$\hod$,
are needed in order to obtain a well-posed nonlocal Stokes system.

%%%%%%%%%%%%%%%%%%%%%%%%%%%%%%%%%%%%%%%%%%%%%%%%%%%%%%%%%%%%%%%
%%%%%%%%%%%%%%%%%%%%%%%%%%%%%%%%%%%%%%%%%%%%%%%%%%%%%%%%%%%%%%%
%%%%%%%%%%%%%%%%%%%%%%%%%%%%%%%%%%%%%%%%%%%%%%%%%%%%%%%%%%%%%%%

%%%%%%%%%%%%%%%%%%%%%%%%%%%%%%%%%%%%%%%%%%%%%%%%%%%%%%%%%%%%%%%
%%%%%%%%%%%%%%%%%%%%%%%%%%%%%%%%%%%%%%%%%%%%%%%%%%%%%%%%%%%%%%%
%%%%%%%%%%%%%%%%%%%%%%%%%%%%%%%%%%%%%%%%%%%%%%%%%%%%%%%%%%%%%%%

\section{Well-posedness of the nonlocal Stokes system with periodic condition}
\label{sec:wellpose} 

In this section, we examine the well-posedness of 
 the nonlocal Stokes system given by \eqref{eq:stokes-nonlocal}. To begin with, let us define an operator associated with
 the vector system by
\begin{align}
\mcA_\delta=\left(\begin{array}{cc} - \nu \mcL_\delta & \mcG_\delta\\
-\mcD_\delta &0\end{array}\right)\,.
\end{align}

Under periodic conditions and the constraints \eqref{eq:constraint}, we write $\bfu$ and $p$ in terms of their Fourier series, namely,
\[
\bfu(\bx) = \frac{1}{(2\pi)^d}\sum_{\bm\xi\in \Z^d, \bm\xi\neq0} \widehat{\bfu}(\bm\xi) e^{i\bm\xi \cdot \bx}  \;\text{and }  \; 
p(\bx)= \frac{1}{(2\pi)^d}\sum_{\bm\xi\in \Z^d,\bm\xi\neq0} \widehat{p}(\bm\xi) e^{i\bm\xi \cdot \bx} \,,
\]
where
\[
 \widehat{\bfu}(\bm\xi)  =\int_\Od \bfu(\bx)  e^{-i \bm\xi\cdot \bx} d\bx  \;\text{and }  \;  \widehat{p}(\bm\xi)= \int_\Od p(\bx)e^{-i \bm\xi\cdot \bx} d\bx\,.
\]

The following lemma provides the associated Fourier symbols of the nonlocal operators.

\begin{lem} \label{lem:fourier}
The Fourier transform of operators $\mcL_\del$, $\mcG_\delta$ and $\mcD_\delta$ are given by
\begin{align}
&\widehat{\mcL_\del \bfu} (\bm\xi)= -\lambda_\del(\bm\xi)  \widehat{\bfu}(\bm\xi) \\
&\widehat{\mcG_\del p} (\bm\xi)=i \bfb_\del(\bm\xi) \widehat{p}(\bm\xi)\\
&\widehat{\mcD_\del \bfu}(\bm\xi)=i (\bfb_\del(\bm\xi))^T \widehat{\bfu}(\bm\xi) \,,
\end{align}
where $ \lambda_\del(\bm\xi) $ and $\bfb_\del(\bm\xi) $ are given by
\begin{align}
 \lambda_\del(\bm\xi)&= \int_{|\bs|\leq \del} \omd(|\bs|)(1-\cos(\bm\xi\cdot \bs)) d\bs \label{coef_lamb} \\
\bfb_\del(\bm\xi)  &= \int_{|\bs|\leq \del} \hod(|\bs|)\frac{\bs}{|\bs|}\sin(\bm\xi\cdot \bs) d\bs \,. \label{coef_b}
\end{align}
\end{lem}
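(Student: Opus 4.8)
The plan is to exploit the translation invariance of the three operators. Since the kernels $\omd(|\bx-\by|)$ and $\vod(\by-\bx)$ depend on $\by-\bx$ only, each of $\mcL_\del$, $\mcG_\del$, $\mcD_\del$ is a (periodized) convolution operator, hence it acts diagonally in the Fourier basis and it suffices to determine its action on each mode $e^{i\bm{\xi}\cdot\bx}$ (and, for vector fields, on $\be_j e^{i\bm{\xi}\cdot\bx}$). First I would change variables $\bs=\by-\bx$ and, using that $\del$ is small and the kernels are supported in $B(\bm{0},\del)$ so that the shifted functions are unambiguously the periodic ones, rewrite
\[
\mcL_\del\bfu(\bx)=\int_{|\bs|\leq\del}\omd(|\bs|)\,(\bfu(\bx+\bs)-\bfu(\bx))\,d\bs,
\]
\[
\mcG_\del p(\bx)=\int_{|\bs|\leq\del}\vod(\bs)\,(p(\bx+\bs)-p(\bx))\,d\bs,\qquad
\mcD_\del\bfu(\bx)=\int_{|\bs|\leq\del}\vod(\bs)\cdot(\bfu(\bx+\bs)+\bfu(\bx))\,d\bs,
\]
with $\vod(\bs)=\hod(|\bs|)\bs/|\bs|$.

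Second I would substitute the Fourier series of $\bfu$ and $p$ and interchange the bounded-domain $\bs$-integral with the summation (legitimate since $\omd,\hod\in L^1$, or simply by testing one basis mode at a time). This reduces the claim to evaluating the scalar integrals $\int_{|\bs|\leq\del}\omd(|\bs|)(e^{i\bm{\xi}\cdot\bs}-1)\,d\bs$, $\int_{|\bs|\leq\del}\vod(\bs)(e^{i\bm{\xi}\cdot\bs}-1)\,d\bs$ and $\int_{|\bs|\leq\del}\vod(\bs)(e^{i\bm{\xi}\cdot\bs}+1)\,d\bs$, which are exactly the Fourier symbols of $\mcL_\del$, $\mcG_\del$ and $\mcD_\del$ respectively.

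Third I would invoke Euler's formula $e^{i\theta}=\cos\theta+i\sin\theta$ together with the parities of the kernels. The function $\omd(|\bs|)$ is even in $\bs$, so $\int\omd(|\bs|)\sin(\bm{\xi}\cdot\bs)\,d\bs=0$ and the symbol of $\mcL_\del$ collapses to $\int\omd(|\bs|)(\cos(\bm{\xi}\cdot\bs)-1)\,d\bs=-\lambda_\del(\bm{\xi})$. The vector kernel $\vod(\bs)=\hod(|\bs|)\bs/|\bs|$ is odd in $\bs$, so both $\int\vod(\bs)\,d\bs$ and $\int\vod(\bs)\cos(\bm{\xi}\cdot\bs)\,d\bs$ vanish, leaving only the imaginary part $i\int\hod(|\bs|)\frac{\bs}{|\bs|}\sin(\bm{\xi}\cdot\bs)\,d\bs=i\bfb_\del(\bm{\xi})$ in the symbols of both $\mcG_\del$ and $\mcD_\del$ (the $\pm1$ term is killed by oddness in either case). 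Reading off $\widehat{\mcG_\del p}(\bm{\xi})=i\bfb_\del(\bm{\xi})\widehat{p}(\bm{\xi})$ is then immediate; for $\mcD_\del$ one contracts the vector $\widehat{\bfu}(\bm{\xi})$ against $\vod(\bs)$ inside the integral, which yields the row-vector symbol $\widehat{\mcD_\del\bfu}(\bm{\xi})=i(\bfb_\del(\bm{\xi}))^T\widehat{\bfu}(\bm{\xi})$.

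I do not expect a genuine obstacle: this is a direct symbol computation. The only points needing a little care are the periodicity bookkeeping (choosing $\del$ small, or fixing the periodic extension, so that $\bfu(\bx+\bs)$ means the right thing), the routine justification of exchanging sum and integral, and, for $\mcD_\del$, keeping the transpose/contraction order straight so that the symbol appears as a row contraction on the column vector $\widehat{\bfu}$. The value of the lemma lies less in its proof than in the fact that it diagonalizes the system operator $\mcA_\del$, thereby setting up the Fourier-analytic well-posedness and local-limit arguments in the sections that follow.
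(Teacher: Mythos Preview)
Your proposal is correct and is precisely the direct symbol computation the paper has in mind; the paper's own proof simply states that the results follow immediately from the definitions of $\mcL_\del$, $\mcG_\delta$ and $\mcD_\delta$, and your change of variables plus parity argument is exactly how one unpacks that remark.
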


\begin{proof}
The results follow immediately from the definitions of $\mcL_\del$, $\mcG_\delta$ and $\mcD_\delta$. 
\end{proof}

%Lemma \ref{lem:fourier} expresses the Fourier coefficients of the nonlocal operators in the form of integrals involving kernel functions.
For convenience,   in the following lemma, we further express $\lambda_\del(\bm\xi)$ and $\bfb_\del(\bm\xi)$ in
\eqref{coef_lamb} and \eqref{coef_b} using polar coordinates.

\begin{lem}
The Fourier symbols
$ \lambda_\del(\bm\xi)$  and $\bfb_\del(\bm\xi) $ in \eqref{coef_lamb} and \eqref{coef_b} can be equivalently expressed as
\begin{equation} \label{coef_lamb_polar}
\lambda_\del(\bm\xi)=
\left\{ 
\begin{aligned}
&4  \int_0^{\pi/2} \int_0^\del r \omd(r)\big(1- \cos(r \cos(\phi)|\bm\xi|) \big) dr d\phi  \quad \text{for } d=2,\\
&4\pi \int_0^{\pi/2} \sin(\phi) \int_0^\del r^2 \omd(r)\big(1- \cos(r \cos(\phi)|\bm\xi|) \big) dr d\phi  \quad \text{for } d=3,
\end{aligned}
\right.
\end{equation}
and
\begin{equation}
\bfb_\del(\bm\xi)=  b_\del(|\bm\xi|)\frac{\bm\xi}{|\bm\xi|} \label{coef_b_2}\,,
\end{equation}
where the scalar coefficient $b_\del(|\bm\xi|)$ is given by
\begin{equation}\label{coef_b_scalar}
b_\del(|\bm\xi|)=
\left\{ 
\begin{aligned}
&4  \int_0^{\pi/2} \cos(\phi) \int_0^\del r \hod(r)\sin(r \cos(\phi)|\bm\xi| ) dr d\phi  \quad \text{for } d=2,\\
&4\pi \int_0^{\pi/2}\cos(\phi) \sin(\phi) \int_0^\del r^2 \hod(r) \sin(r \cos(\phi)|\bm\xi| ) dr d\phi \quad \text{for } d=3.
\end{aligned}
\right.\,.
\end{equation}
\end{lem}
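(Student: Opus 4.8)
The plan is to reduce both identities to one-dimensional radial integrals by exploiting the rotational invariance of the kernels and introducing polar (in $d=2$) or spherical (in $d=3$) coordinates whose polar axis is aligned with $\bm\xi$. Starting from the expression \eqref{coef_lamb} for $\lambda_\del(\bm\xi)$ obtained in Lemma~\ref{lem:fourier}, I would first observe that, since $\omd$ is radial, the integrand $\omd(|\bs|)(1-\cos(\bm\xi\cdot\bs))$ is unchanged when $\bm\xi$ is rotated, so without loss of generality $\bm\xi=|\bm\xi|\,\be_1$ and $\bm\xi\cdot\bs=|\bm\xi|\,s_1$. Writing $s_1=r\cos\phi$ with $r=|\bs|$ and $\phi$ the angle measured from the $\be_1$-axis, the area element is $r\,dr\,d\phi$ with $\phi\in[0,2\pi)$ when $d=2$, and the volume element is $r^2\sin\phi\,dr\,d\phi\,d\psi$ with $\phi\in[0,\pi]$ and $\psi\in[0,2\pi)$ when $d=3$.

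Next I would carry out the angular integration. When $d=3$ the integrand does not depend on the azimuthal angle $\psi$, which contributes a factor $2\pi$. Since $1-\cos(r|\bm\xi|\cos\phi)$ is an even function of $\cos\phi$, it is invariant under $\phi\mapsto\pi-\phi$ and, when $d=2$, also under $\phi\mapsto-\phi$; folding the $\phi$-domain accordingly replaces $\int_0^{2\pi}$ by $4\int_0^{\pi/2}$ when $d=2$, and (using in addition that $\sin\phi$ is symmetric about $\phi=\pi/2$) replaces $\int_0^{\pi}$ by $2\int_0^{\pi/2}$ when $d=3$, for the overall constant $2\pi\cdot 2=4\pi$. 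Exchanging the order of the $r$- and $\phi$-integrations then gives \eqref{coef_lamb_polar}.

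For $\bfb_\del(\bm\xi)$ I would first show it points along $\bm\xi$. Consider the reflection that fixes the component of $\bs$ parallel to $\bm\xi$ and negates the component orthogonal to $\bm\xi$: it preserves $|\bs|$ and $\bm\xi\cdot\bs$, hence $\hod(|\bs|)\sin(\bm\xi\cdot\bs)$, while it negates the part of $\bs/|\bs|$ orthogonal to $\bm\xi$; since Lebesgue measure is invariant under this reflection, the orthogonal part of the integral in \eqref{coef_b} vanishes, so $\bfb_\del(\bm\xi)=b_\del(|\bm\xi|)\,\bm\xi/|\bm\xi|$ with $b_\del(|\bm\xi|)=\bfb_\del(\bm\xi)\cdot\bm\xi/|\bm\xi|$. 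Aligning $\bm\xi$ with $\be_1$ again and using $s_1/|\bs|=\cos\phi$, the scalar integrand becomes $\hod(r)\cos\phi\sin(r|\bm\xi|\cos\phi)$; since $\cos\phi$ and $\sin(r|\bm\xi|\cos\phi)$ are each odd in $\cos\phi$, their product is even in $\cos\phi$, so exactly the same folding of the angular domain as above produces the constants $4$ and $4\pi$ and hence \eqref{coef_b_2}--\eqref{coef_b_scalar}.

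I do not expect a genuine obstacle: the statement is essentially a change-of-variables bookkeeping exercise. The two points that call for care are the parity argument that eliminates the components of $\bfb_\del$ transverse to $\bm\xi$, and the tracking of the numerical factors $4$ and $4\pi$ when the angular domains $[0,2\pi)$ (for $d=2$) and $[0,\pi]$ (for $d=3$) are collapsed to $[0,\pi/2]$ --- one must apply the symmetry $\phi\mapsto\pi-\phi$ consistently to the $\lambda$-integrand and to the $b$-integrand, the latter carrying the extra factor $\cos\phi$.
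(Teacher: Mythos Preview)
Your proposal is correct and follows essentially the same route as the paper: exploit the rotational invariance of the radial kernels to align $\bm\xi$ with a coordinate axis, then pass to polar/spherical coordinates and fold the angular domain by parity. The only cosmetic difference is that the paper establishes $\bfb_\del(\bm\xi)\parallel\bm\xi$ via the covariance relation $\bfb_\del(\bm\xi)=R^T\bfb_\del(R\bm\xi)$ and an explicit rotation matrix (reading off $R_{3i}=\xi_i/|\bm\xi|$), whereas your reflection argument kills the transverse components directly; both are equivalent symmetry arguments and lead to the same scalar integral.
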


\begin{proof}
Let us show \eqref{coef_b_2} with $d=3$. The case with $d=2$ is similar and omitted. 
First we observe that for any orthogonal matrix $R$, we have
\[
\bfb_\del(\bm\xi)= R^T\bfb_\del(R\bm\xi)\,.
\]
Now denote $\be=(0,0,1)$. Let $R$ be the rotation matrix which rotates $\bm\xi$ to be aligned with $\be$, namely
\[
R\bm\xi = |\bm\xi| \be\,.
\]
Then $R\bm\xi\cdot\bs=|\bm\xi| s_3$ and 
\[
\bfb_\del(\bm\xi)= \int_{|\bs|\leq \del} \hod(|\bs|)\frac{R^T\bs}{|\bs|}\sin(|\bm\xi|s_3) d\bs \,.
\]
So each component of $\bfb_\del(\bm\xi)$ is given by
\[
\begin{split}
[\bfb_\del(\bm\xi)]_i &= \int_{|\bs|\leq \del}\frac{ \hod(|\bs|)}{|\bs|} \sum_{j=1}^3 R_{ji} s_j\sin(|\bm\xi|s_3) d\bs \\
&= \int_{|\bs|\leq \del}\frac{ \hod(|\bs|)}{|\bs|}R_{3i} s_3 \sin(|\bm\xi|s_3) d\bs \quad\text{for } i=1,2,3\,.
\end{split}
\]

The next task at hand  is to find $\{R_{3i}, i=1,2,3\}$. 
We know that $R$ is the matrix obtained by rotating $\bm\xi$ by an angle of $\arccos(\be\cdot \frac{\bm\xi}{|\bm\xi|})=\arccos({\xi_3}/{|\bm\xi|})$ around the axis in the direction of 
$$\frac{\bm\xi \times\be}{|\bm\xi \times\be|}=\frac{1}{\sqrt{\xi_1^2+\xi_2^2}}(\xi_2,-\xi_1,0).$$
 Such a rotation matrix can be explicitly
constructed. In particular, $R_{3i}$ is given by
\[
R_{3i}= \frac{\xi_i}{|\bm\xi|}\,, \quad \text{ for } i=1, 2,3\,. 
\]

Combing the above arguments, we obtain
\[
\begin{split}
\bfb_\del(\bm\xi)&=\frac{\bm\xi}{|\bm\xi|}\int_{|\bs|\leq \del} \hod(|\bs|) \frac{s_3}{|\bs|}  \sin(|\bm\xi|s_3) d\bs \\
&=\frac{\bm\xi}{|\bm\xi|} \left( 2\pi \int_0^{\pi}\cos(\phi) \sin(\phi) \int_0^\del r^2 \hod(r) \sin(r \cos(\phi)|\bm\xi| ) dr d\phi\right) \\
&= \frac{\bm\xi}{|\bm\xi|}  b_\del(\bm\xi)\,,
\end{split}
\]
where $b_\del(\bm\xi)$ is given by \eqref{coef_b_scalar}. 

Now \eqref{coef_lamb_polar} can be obtained similarly by noticing that $\lambda_\del(\bm\xi)=\lambda_\del(R\bm\xi)$  for any orthogonal matrix $R$. 
%\qed
\end{proof}

Via Fourier analysis, we now get a $(d+1)\times (d+1)$ matrix system:  
\[
A_\del(\bm\xi) \left( \begin{array}{c} \widehat{\bfu}(\bm\xi) \\  \widehat{p}(\bm\xi) \end{array} \right) =   \left( \begin{array}{c} \widehat{\bff}(\bm\xi) \\ 0 \end{array} \right)
\]
where 
\[ A_\del(\bm\xi)  =  \left( \begin{array}{cc}  \lambda_\del(\bm\xi) I_d& i \bfb_\del(\bm\xi) \\ -i (\bfb_\del(\bm\xi))^T  &0 \end{array} \right) \,.
\]
For each fixed $\bm\xi$, in order for the matrix $ A_\del(\bm\xi) $ to be invertible, we need $\lambda_\del(\bm\xi)$ and $\bfb_\del(\bm\xi)$ to be nonzero. 
Under such assumptions, the inverse of $ A_\del(\bm\xi) $ is given by
\begin{equation} \label{matrix_inverse}
(A_\del(\bm\xi))^{-1}=\left( \begin{array}{cc} \frac{1}{ \lambda_\del(\bm\xi)} \D \left( I_d-\frac{\bfb_\del(\bm\xi)\otimes \bfb_\del(\bm\xi)}{|\bfb_\del(\bm\xi)|^2}\right)& \D i \frac{\bfb_\del(\bm\xi)}{|\bfb_\del(\bm\xi)|^2} \\ 
\D -i \frac{(\bfb_\del(\bm\xi))^T}{|\bfb_\del(\bm\xi)|^2}   & \D \frac{-\lambda_\del(\bm\xi)}{|\bfb_\del(\bm\xi)|^2} \end{array} \right)\,.
\end{equation}

Now we can easily  observe from the expression in equation \eqref{coef_lamb_polar} that 
 $\lambda_\del(\bm\xi) $ is positive for any $\bm\xi$.
It is however more delicate to check the non-degeneracy of  $\bfb_\del(\bm\xi)$ since the latter
 is an integral of a product of a positive function with  a sign-changing oscillatory function. 
The following lemma (also utilized in \cite{dt17}) gives a simple observation on the sine Fourier coefficient that is useful to our discussion 
on the positivity of  $\bfb_\del(\bm\xi)$.

\begin{lem} \label{lem:positivity}
Given a measurable, non-negative and non-increasing function $g=g(x)$ with $xg(x)$ integrable, we have
\beq
\label{fou1}
\int_0^{2\pi} g(x) \sin(x) dx \geq 0\,
\eeq
with the equality holds only for $g$ being a constant function. Consequently, for any 
 $h>0$ and $a>0$, we have
\beq
\int_0^h g(x) \sin(ax) dx \geq 0\,,
\eeq
with the equality holds only for $g$ being a constant function (and with value zero if the product $ha$ is not an integer multiple of $2\pi$).
\end{lem}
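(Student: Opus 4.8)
\noindent\emph{Proof idea.} The plan is to reduce the whole statement to a single model integral, $\int_0^{2\pi} g(x)\sin x\,dx$, and to exploit the fact that $\sin$ changes sign precisely at integer multiples of $\pi$. For \eqref{fou1} I would split the integral at $x=\pi$ and fold $(\pi,2\pi)$ back onto $(0,\pi)$ through the substitution $x\mapsto x+\pi$, using $\sin(x+\pi)=-\sin x$, which gives
\begin{equation*}
\int_0^{2\pi} g(x)\sin x\,dx=\int_0^\pi\big(g(x)-g(x+\pi)\big)\sin x\,dx\,.
\end{equation*}
Since $g$ is non-increasing, $g(x)-g(x+\pi)\ge 0$, and $\sin x\ge 0$ on $[0,\pi]$, so the integrand is non-negative pointwise; the integrability hypothesis on $xg(x)$ is exactly what guarantees absolute convergence, because $|g(x)\sin x|\le x\,g(x)$. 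For the equality case the integrand must vanish a.e.\ on $(0,\pi)$, and as $\sin x>0$ there this forces $g(x)=g(x+\pi)$ for a.e.\ $x\in(0,\pi)$; comparing two such points $x_1<x_2$ gives $g(x_1)\ge g(x_2)\ge g(x_1+\pi)=g(x_1)$, so $g$ is (a.e.) constant on $(0,2\pi)$.

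For the second assertion I would first rescale: the substitution $x=t/a$ turns $\int_0^h g(x)\sin(ax)\,dx$ into $a^{-1}\int_0^{ah}G(t)\sin t\,dt$ with $G(t):=g(t/a)$, again non-negative and non-increasing, so it suffices to show $\int_0^H G(t)\sin t\,dt\ge 0$ for every $H>0$. Writing $H=2\pi N+\rho$ with $N\in\mathbb{Z}_{\ge 0}$ and $\rho\in[0,2\pi)$, I would split $[0,H]$ into the $N$ full periods $[2\pi k,2\pi(k+1)]$ together with the remainder $[2\pi N,2\pi N+\rho]$. On each full period, shifting $t\mapsto s+2\pi k$ and using $2\pi$-periodicity of $\sin$ reduces the integral to $\int_0^{2\pi}G(s+2\pi k)\sin s\,ds$, which is $\ge 0$ by the first part applied to the shifted kernel $G(\cdot+2\pi k)$ (still non-negative and non-increasing).

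The remaining point, which I expect to be the real obstacle, is the partial period $\int_0^\rho\tilde G(t)\sin t\,dt$ with $\tilde G:=G(\cdot+2\pi N)$ and $\rho\in[0,2\pi)$: here $\sin$ is no longer single-signed, so positivity cannot be read off termwise. If $\rho\le\pi$ the integral is non-negative outright. If $\rho\in(\pi,2\pi)$ I would split at $\pi$, fold $(\pi,\rho)$ onto $(0,\rho-\pi)$ as before to obtain $\int_0^\pi\tilde G(s)\sin s\,ds-\int_0^{\rho-\pi}\tilde G(s+\pi)\sin s\,ds$, and then bound the subtracted term by first enlarging its domain from $(0,\rho-\pi)$ to $(0,\pi)$ (the added part has non-negative integrand) and then using $\tilde G(s+\pi)\le\tilde G(s)$, which yields $\int_0^{\rho-\pi}\tilde G(s+\pi)\sin s\,ds\le\int_0^\pi\tilde G(s)\sin s\,ds$; hence the partial period is $\ge 0$. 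Summing all contributions gives $\int_0^H G(t)\sin t\,dt\ge 0$. The equality statement then follows by tracking when the comparisons above are tight, together with the elementary computation $\int_0^h c\,\sin(ax)\,dx=\tfrac{c}{a}\big(1-\cos(ah)\big)$ for constant $g\equiv c$, which vanishes exactly when $ah\in 2\pi\Z$ or $c=0$.
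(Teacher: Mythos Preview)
Your argument is correct and matches the paper's approach: the folding identity $\int_0^{2\pi} g(x)\sin x\,dx=\int_0^\pi\big(g(x)-g(x+\pi)\big)\sin x\,dx$ is exactly what the paper uses for \eqref{fou1}, and for the general inequality the paper performs the same rescaling but then handles the partial period in a single stroke by extending $g$ by zero beyond $(0,h)$---which preserves non-negativity and monotonicity---so that only complete periods remain. Your explicit case analysis on the remainder $\rho\in(\pi,2\pi)$ is precisely this zero-extension trick unpacked by hand.
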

\begin{proof} The inequality \eqref{fou1} follows immediately from the observation that
\[\int_0^{2\pi} g(x) \sin(x) dx  = \int_0^{\pi} [g(x)-g(x+\pi)] \sin(x) dx \geq 0.
\]
By the non-increasing property, we see that the  equality holds only for $g$ being a constant function. 
The more general case follows by applying a change of variable and taking a zero extension of $g$ outside $(0,h)$ to
cover complete periods of the scaled sine function.  %\qed
\end{proof}

A simple consequence of Lemma \ref{lem:positivity} and \eqref{coef_b_scalar}  is that $\bfb_\del(\bm\xi)$ is positive for 
any fixed $\bm\xi$ if $r^{d-1} \hat\omega(r)$ is a non-increasing function. 
This simple fact gives us a hint that in order for the nonlocal Stokes system to be well-posed, 
one should expect the nonlocal interaction in the gradient and divergence operators be suitably strengthened 
for physical points (particles) in closer proximity.  Now to offer a precise energy estimate, in the rest of this section, we 
assume that the kernel  $\hat\omega_\del(r)$ satisfies the following additional conditions.

\begin{assu} \label{assu:frackernel}
The kernel $\hat\omega_\del(r)$ is the rescaling of $\hat\omega(r)$ given by \eqref{kernel_rescale} with $\hat\omega(r)$ satisfying the following conditions. 
\begin{enumerate}
\item $r^{d-1}\hat\omega(r)$ is nonincreasing for $r\in (0,1)$;
\item $\hat\omega(r)$  is of fractional type in at least a small neighborhood of origin, namely, 
 there exists some $\ep>0$ such that for $s\in (0,\ep)$ we have
\begin{equation} \label{frackernel}
\hat\omega(r)= \frac{c}{r^{d+\beta}} \,,
\end{equation}
for some constant $c>0$ and $\beta\in (-1,1)$.
\end{enumerate} 
\end{assu}

\begin{remark}
The condition that   $r^{d-1}\hat\omega(r)$ is nonincreasing gives us a sufficient condition for $\bfb_\del(\bm\xi)$ to stay positive for finite $|\bm\xi|$. It is not a necessary condition, 
but it is shown later  by some examples that $\bfb_\del(\bm\xi)$ can be zero for finite $|\bm\xi|$ if the nonincreasing condition is violated, thus the nonlocal Stokes system is ill-posed in such cases. {Consequently, SPH schemes based on the nonlocal gradients with bounded and smooth kernels obtained from the smoothing step may contain intrinsic instabilities
that are difficult to eliminate on the discretization level, especially when the particle distribution becomes uneven or 
highly disordered.}

As an illustration, in Figure~\ref{fig:b} we plot the values of $b_\del(|\bm\xi|)$ against $|\bm\xi|$ for the kernel $\hat\omega(r)=\frac{1}{r^{d+\beta}}$ with $\beta< -1$,
 using the expression in \eqref{coef_b_scalar} with $d=2$ and $d=3$ respectively.
We observe from the plots the tendency for $b_\del(|\bm\xi|)$ to stay positive with the kernel $\hat\omega(r)$ being more singular at zero.  
 On the other hand, clear numerical evidence from the plots show that $b_\del(|\bm\xi|)$ may become zero 
 at some finite frequencies for $\beta<-1.5 $ in the two dimensional case  
 and for  $\beta< -2$ in the three dimensional case. 
\begin{figure}[h!]
  \centering
 \includegraphics[width=0.45\textwidth]{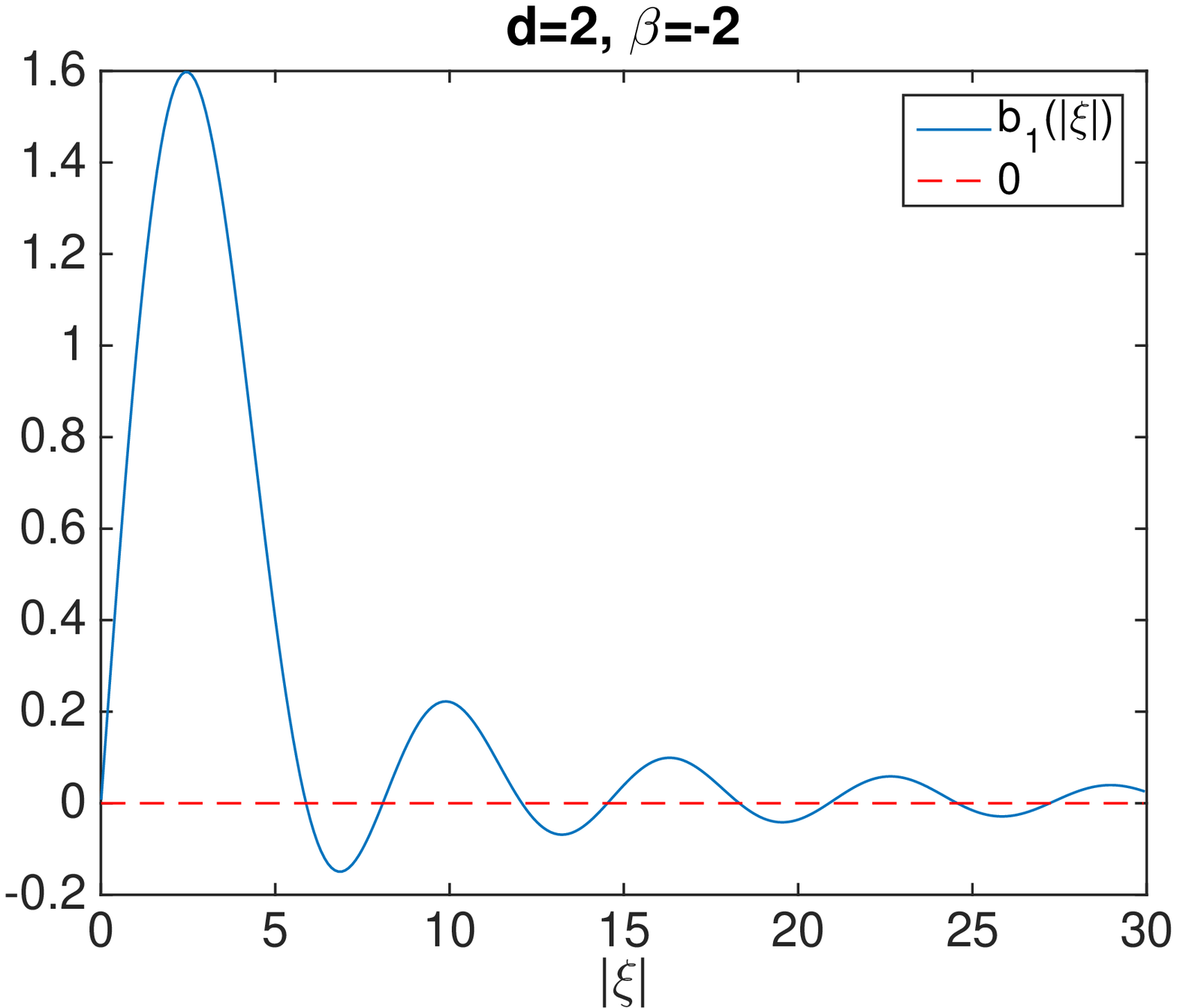}\quad   \includegraphics[width=0.45\textwidth]{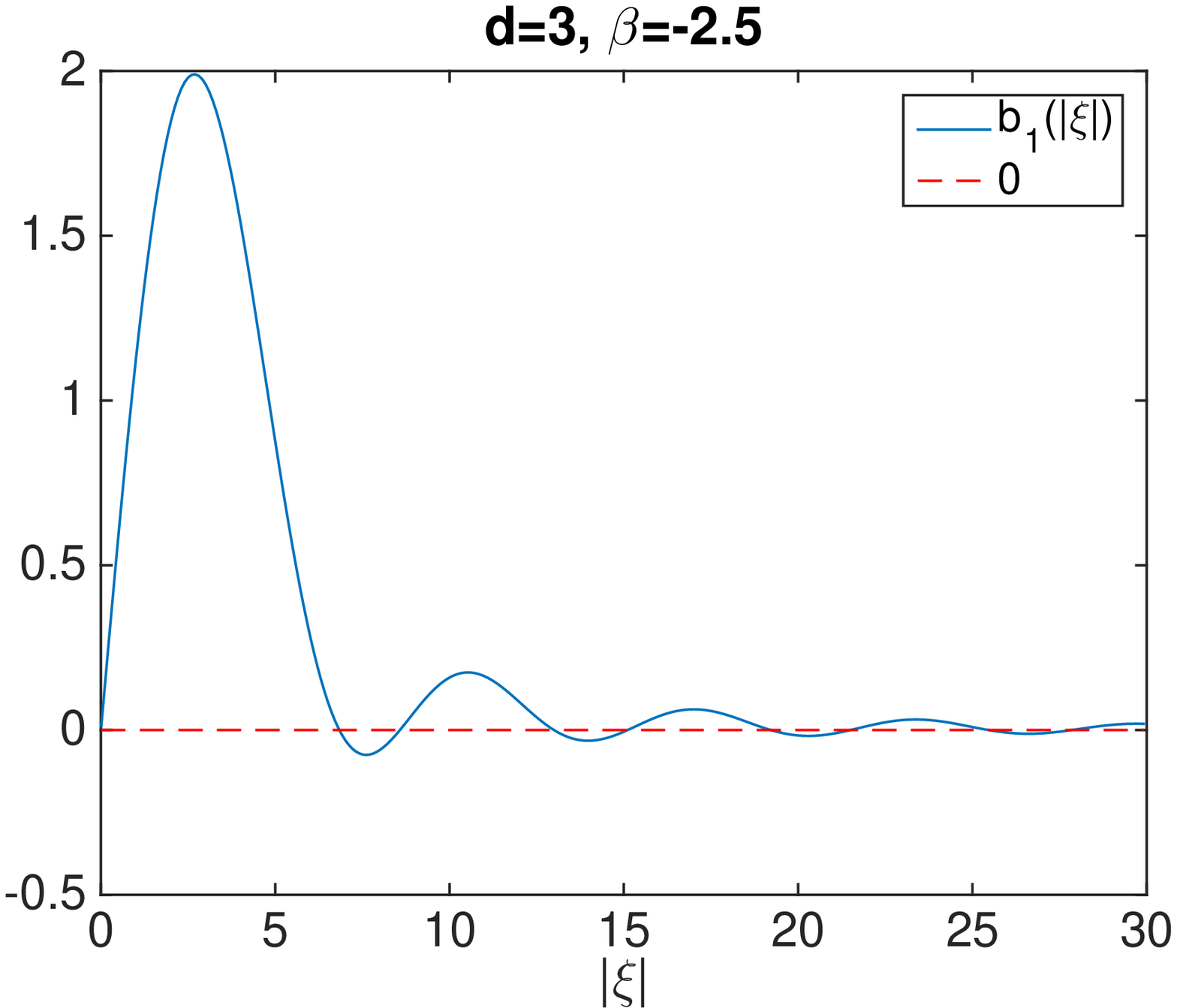}\\
  \includegraphics[width=0.45\textwidth]{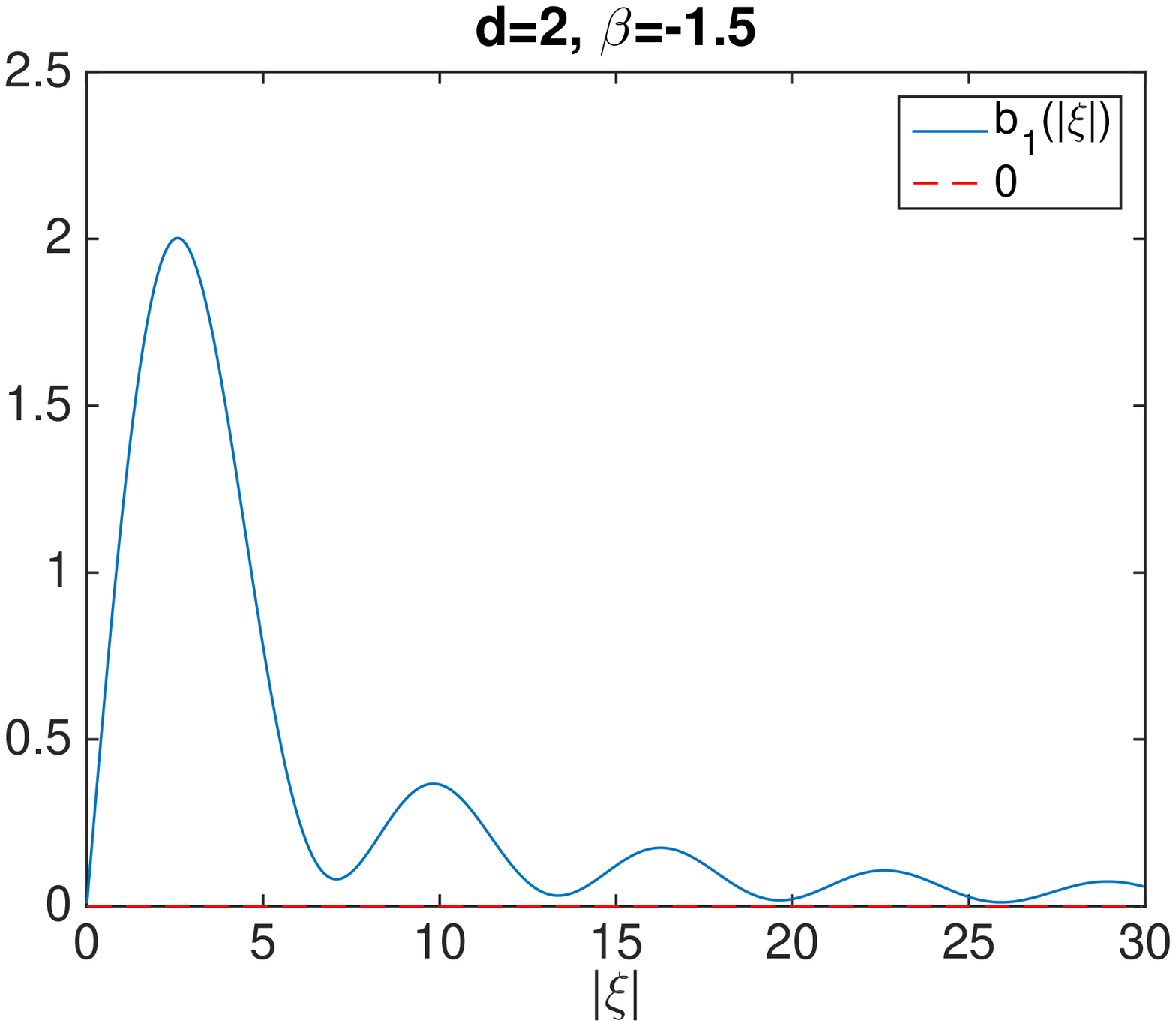}\quad       \includegraphics[width=0.45\textwidth]{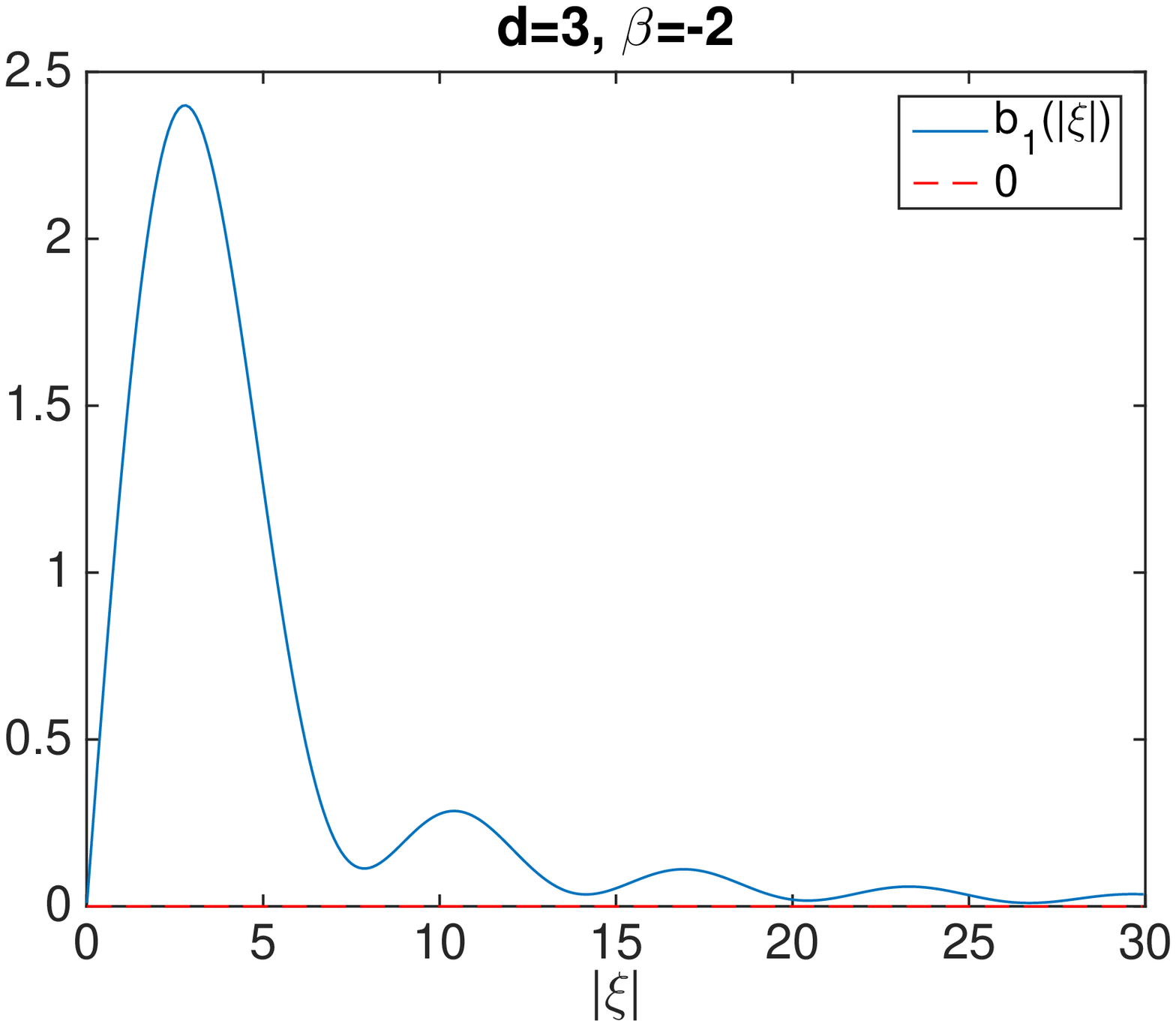}\\
    \includegraphics[width=0.45\textwidth]{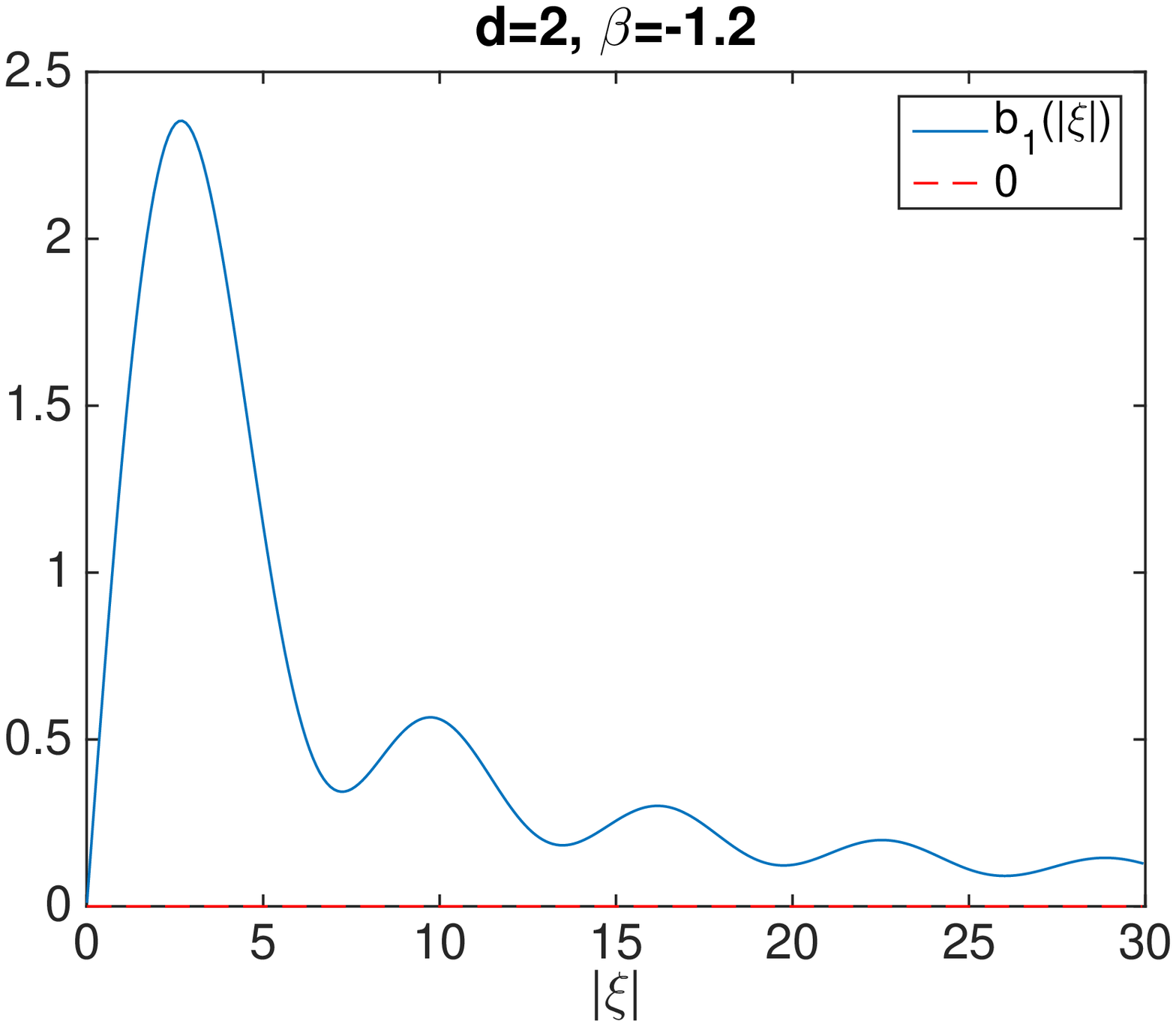}\quad   \includegraphics[width=0.45\textwidth]{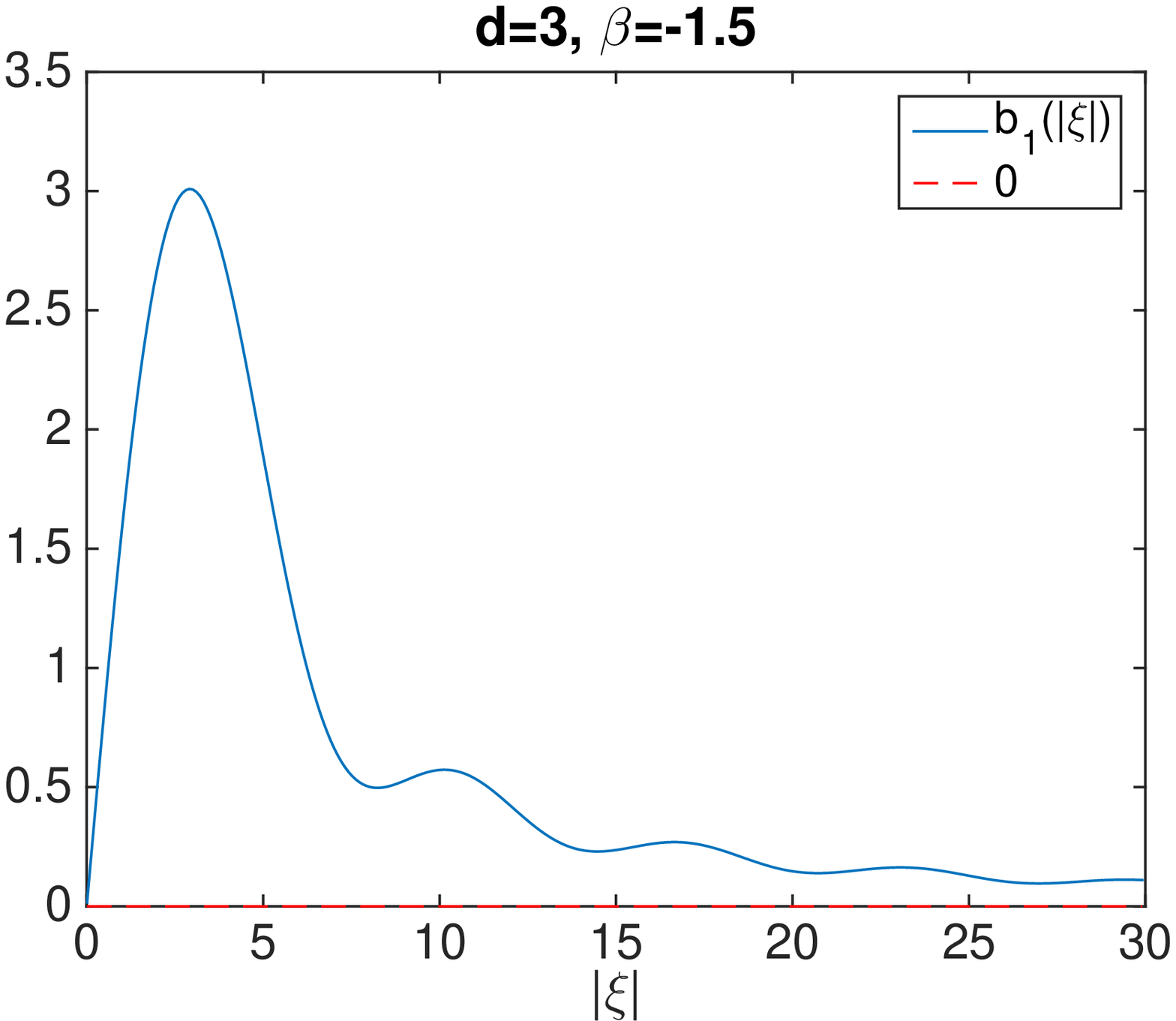}
        \caption{Values of $b_\del(|\bm\xi|)$ against $|\bm\xi|$ for $\del=1$, $d=2$ (left column from top to bottom) with $\beta=-2$ (top), $\beta= -1.5$ (middle), $\beta= -1.2$ (bottom), and $d=3$ (right column from top to bottom) with $\beta=-2.5$ (top),  $\beta= -2$ (middle),  $\beta= -1.5$ (bottom).}
\label{fig:b}
\end{figure}
\end{remark}

The following theorem establishes the existence of a unique solution to the nonlocal Stokes equation.

\begin{thm}
  \label{thm:existence}
Assume that the kernels $\omega_\del$ and $\hat\omega_\del$ satisfy Assumptions~\ref{assu:frackernel} 
and \ref{kernel_rescale}.
Given $\delta>0$, 
there exists a unique solution $(\bfu_\delta, p_\delta)$
to the nonlocal Stokes system \eqref{eq:stokes-nonlocal} with periodic boundary condition 
given in the form of their Fourier series with $(\widehat{\bfu}_\del (\bm\xi), \widehat{p}_\del (\bm\xi))$ computed through
\begin{equation} \label{eq:fouriercoef}
\left( \begin{array}{c} \widehat{\bfu}_\del (\bm\xi) \\  \widehat{p}_\del (\bm\xi) \end{array} \right) = (A_\del(\bm\xi))^{-1} \left( \begin{array}{c} \widehat{\bff}(\bm\xi) \\ 0 \end{array} \right)
\end{equation}
where $(A_\del(\bm\xi))^{-1}$ are defined by \eqref{matrix_inverse}. 
In addition, with $C$ independent of $\del$ and $\bff$ we have
\begin{align}
 \label{estimate_u}  &\|\bfu_\delta\|_{[\cS_\del(\Od)]^d} \leq  C\|\bff\|_{[\cS_\del^\ast(\Od)]^d}   \\
 \label{estimate_p} &\| p_\del\|_{L^2(\Od)} \leq C \| \bff \|_{[H^{-\beta}(\Od)]^d}\,, 
\end{align}
where $\cS_\del(\Od)$ is the energy space with its norm associated with the Fourier symbol $(\lambda_\del(\bm\xi))^{1/2}$ and $\cS_\del^\ast(\Od)$ is its dual space, and 
$\beta\in(-1,1)$ is the exponent defined through \eqref{frackernel}.
\end{thm}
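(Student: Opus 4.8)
The plan is to diagonalize the whole coupled system by Fourier series. By Lemma~\ref{lem:fourier} and the subsequent polar reductions \eqref{coef_lamb_polar}--\eqref{coef_b_scalar}, solving \eqref{eq:stokes-nonlocal} under the periodic, zero-mean constraints is equivalent to solving, for each $\bm\xi\in\Z^d\setminus\{0\}$, the decoupled $(d+1)\times(d+1)$ system $A_\del(\bm\xi)(\widehat{\bfu}(\bm\xi),\widehat{p}(\bm\xi))^T=(\widehat{\bff}(\bm\xi),0)^T$. Hence existence and uniqueness will follow once I show $A_\del(\bm\xi)$ is invertible for every $\bm\xi\neq 0$, which by the explicit formula \eqref{matrix_inverse} reduces to verifying $\lambda_\del(\bm\xi)>0$ and $\bfb_\del(\bm\xi)\neq 0$; the solution is then forced to be \eqref{eq:fouriercoef} mode by mode, so uniqueness is automatic, and I only need the two a~priori bounds to guarantee that these Fourier coefficients actually assemble into functions in the claimed spaces.

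Next I would settle the non-degeneracy. Positivity of $\lambda_\del(\bm\xi)$ is immediate from \eqref{coef_lamb_polar}: the radial integrand $1-\cos(r\cos(\phi)|\bm\xi|)$ is nonnegative and, for a fixed $\bm\xi\neq 0$, strictly positive on a set of positive measure since $r$ ranges over an interval. For $\bfb_\del(\bm\xi)=b_\del(|\bm\xi|)\bm\xi/|\bm\xi|$ I would apply Lemma~\ref{lem:positivity} to the inner radial integral in \eqref{coef_b_scalar} with $g(r)=r^{d-1}\hod(r)$: by Assumption~\ref{assu:frackernel}(1) this $g$ is nonincreasing, and it is not constant since it is $\propto r^{-1-\beta}$ near the origin by \eqref{frackernel}, so each inner integral is nonnegative and in fact strictly positive for every $\phi\in(0,\pi/2)$, while the outer weight $\cos\phi\sin\phi$ (resp. $\cos\phi$) is positive there; hence $b_\del(|\bm\xi|)>0$. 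Thus $A_\del(\bm\xi)$ is invertible with inverse \eqref{matrix_inverse}, and \eqref{eq:fouriercoef} gives a well-defined candidate.

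For the velocity estimate \eqref{estimate_u} I would use that the $(1,1)$-block of \eqref{matrix_inverse} equals $\lambda_\del(\bm\xi)^{-1}$ times the orthogonal projection onto the hyperplane orthogonal to $\bfb_\del(\bm\xi)$, hence has operator norm $\lambda_\del(\bm\xi)^{-1}$. Therefore $\widehat{\bfu}_\del(\bm\xi)=\lambda_\del(\bm\xi)^{-1}P_{\bm\xi}\widehat{\bff}(\bm\xi)$ and $\lambda_\del(\bm\xi)|\widehat{\bfu}_\del(\bm\xi)|^2\le\lambda_\del(\bm\xi)^{-1}|\widehat{\bff}(\bm\xi)|^2$ for every $\bm\xi$; summing over $\bm\xi$ via Parseval against the definitions of $\|\cdot\|_{\cS_\del}$ and its dual $\|\cdot\|_{\cS_\del^\ast}$ (both read off the symbol $\lambda_\del^{1/2}$) yields \eqref{estimate_u}, in fact with $C=1$. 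This finiteness also shows $\bfu_\del\in[\cS_\del(\Od)]^d$, so the Fourier series genuinely represents a function.

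The pressure estimate \eqref{estimate_p} is where the real work lies. From the $(2,1)$-block of \eqref{matrix_inverse}, $|\widehat{p}_\del(\bm\xi)|\le|\widehat{\bff}(\bm\xi)|/b_\del(|\bm\xi|)$, so by Parseval the estimate reduces to the quantitative lower bound $b_\del(|\bm\xi|)\ge c\,|\bm\xi|\,(1+\delta|\bm\xi|)^{\beta-1}$, uniformly in $\bm\xi\neq 0$ and in $\delta$ over a bounded range, since this gives $b_\del(|\bm\xi|)^{-2}\le C(1+|\bm\xi|^2)^{-\beta}$ for all integer frequencies $|\bm\xi|\ge 1$. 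To prove it I would first use the rescaling $b_\del(|\bm\xi|)=\delta^{-1}b_1(\delta|\bm\xi|)$ from \eqref{kernel_rescale} to reduce to $\delta=1$, then treat two regimes. For bounded frequency, the moment normalization \eqref{hodmnt} gives $b_1(\eta)=\eta+o(\eta)$ as $\eta\to 0^+$, together with continuity and strict positivity, hence $b_1(\eta)\gtrsim\eta$. For large frequency, I would split $\hat\omega$ into its fractional part $c\,r^{-d-\beta}$ near the origin (cf. \eqref{frackernel}) plus a bounded, slightly regular remainder: the radial sine transform of the fractional part is an explicitly computable \emph{positive} constant times $\eta^{\beta}$ (a Riesz-type identity, convergent precisely because $\beta\in(-1,1)$), whereas the remainder contributes a lower-order term by Riemann--Lebesgue (with a quantitative rate once the remainder is a bit smooth), so $b_1(\eta)\sim c_\beta\,\eta^{\beta}$ as $\eta\to\infty$. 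Patching the two regimes, using positivity and continuity of $b_1$ in between, yields $b_1(\eta)\ge c\,\eta\,(1+\eta)^{\beta-1}$ and hence the claimed bound; Parseval then delivers \eqref{estimate_p} and shows $p_\del\in L^2(\Od)$. I expect this asymptotic analysis of the oscillatory integral $b_\del$ --- pinning down both its sign for every $\bm\xi$ (already handled above) and its sharp growth/decay rate $|\bm\xi|^{\beta}$ forced by the fractional structure of $\hat\omega$ near the origin --- to be the main obstacle; everything else is the Fourier diagonalization, the explicit block inverse \eqref{matrix_inverse}, and Parseval.
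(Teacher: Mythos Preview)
Your Fourier diagonalization, the invertibility via Lemma~\ref{lem:positivity}, and the velocity bound via the orthogonal-projection block are all exactly as in the paper. The gap is in your large-frequency lower bound for $b_1(\eta)$. Splitting $\hat\omega$ into $c\,r^{-d-\beta}\chi_{(0,\ep)}$ and a bounded remainder supported on $[\ep,1]$, you invoke Riemann--Lebesgue for the remainder. But the assumptions impose no smoothness on $\hat\omega|_{[\ep,1]}$, so Riemann--Lebesgue yields only $o(1)$ with no rate; when $\beta\in(-1,0]$ this cannot be absorbed into the main term $c_\beta\eta^\beta$ (which itself tends to zero), and the asserted asymptotic $b_1(\eta)\sim c_\beta\eta^\beta$ does not follow. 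Your own parenthetical ``with a quantitative rate once the remainder is a bit smooth'' is precisely the missing hypothesis.

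The paper avoids this by splitting the domain of the $r$-integral rather than the kernel. After restricting to $\phi\in(0,\pi/3)$ (legitimate since the inner $r$-integral is $\ge0$ for every $\phi$ by Lemma~\ref{lem:positivity}) and taking $a=\delta|\bm\xi|>4\pi/\ep$, one has $r_*:=2\pi/(\cos(\phi)\,a)<\ep$. The tail $\int_{r_*}^{1}r^{d-1}\hat\omega(r)\sin(r\cos(\phi)\,a)\,dr$ becomes, after the shift $r\mapsto r+r_*$, a nonincreasing function times a sine over $[0,1-r_*]$, hence $\ge0$ again by Lemma~\ref{lem:positivity}. The head $\int_0^{r_*}$ lies entirely in the fractional zone, so the change of variables $s=r\cos(\phi)\,a$ reduces it to the fixed positive constant $\int_0^{2\pi}s^{-1-\beta}\sin s\,ds$ times $(\cos(\phi)\,a)^\beta$. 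Integrating in $\phi$ then gives $b_\delta(|\bm\xi|)\ge C|\bm\xi|^\beta/\delta^{1-\beta}$ for all $\beta\in(-1,1)$, with no appeal to decay of an oscillatory remainder. The idea you are missing is to use the monotonicity hypothesis a second time---to discard the tail with the correct sign---rather than to estimate it.
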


\begin{proof}
From Lemma \ref{lem:positivity} and Assumption \ref{assu:frackernel}, we know that $A_\del(\bm\xi)$ is invertible and the inverse is given by \eqref{matrix_inverse}. 
This gives us
 \[
\widehat{\bfu}_\del (\bm\xi)  = \frac{1}{ \lambda_\del(\bm\xi)} \left( I_d-\frac{\bfb_\del(\bm\xi)\otimes \bfb_\del(\bm\xi)}{|\bfb_\del(\bm\xi)|^2}\right) \widehat{\bff} (\bm\xi) 
\]
and 
\[
\widehat{p}_\del (\bm\xi) = -i \frac{(\bfb_\del(\bm\xi))^T}{|\bfb_\del(\bm\xi)|^2}  \widehat{\bff} (\bm\xi) \,.
\]
So we have
\begin{equation} \label{eq:existthm_1}
|\widehat{\bfu}_\del (\bm\xi)|\leq C \left| \frac{1}{ \lambda_\del(\bm\xi)} \right|  | \widehat{\bff} (\bm\xi) | \,,
\end{equation}
and 
\begin{equation}  \label{eq:existthm_2}
|\widehat{p}_\del (\bm\xi)|\leq  \left| \frac{1}{b_\del(\bm\xi)} \right| | \widehat{\bff} (\bm\xi) |\,.
\end{equation}

From \eqref{eq:existthm_1}, we know immediately that \eqref{estimate_u} is true.  

Now we are left to show \eqref{estimate_p}. From equation \eqref{eq:existthm_2} we only need to estimate $|1/b_\del(|\bm\xi|)|$. We again address the case $d=3$.  
Under the assumption that $r^2\hat\omega(r)$ is nonincreasing, we can use Lemma \ref{lem:positivity} to write
\[
\begin{split}
b_\del(|\bm\xi|)&= 4\pi \int^{\pi/2}_{0}\cos(\phi) \sin(\phi) \int_0^\del r^2 \hod(r) \sin(r \cos(\phi)|\bm\xi| ) dr d\phi  \\
&= \frac{2\pi}{\del} \int^{\pi/2}_{0} \sin(2\phi) \int_0^1 r^2 \hat\omega(r) \sin(r \cos(\phi) \del |\bm\xi|) dr d\phi\,.
\end{split}
\]
Notice that the integral  in the above quantity is positive for any finite $\del |\bm\xi|$ under the assumption that $r^2 \hat\omega(r)$ is nonincreasing.
Indeed, from Lemma \ref{lem:positivity},  we know that the integrand is always nonnegative and it is only possibly zero when
$\cos(\phi) \del |\bm\xi|$ is multiples of $2\pi$, which is a set of measure zero. Thus the integral above is positive for any fixed numbers $\del>0$ and $|\bm\xi|>0$.

Again we denote $a=\del|\bm\xi|$ . 
For $a<1$, we use 
$$\sin(x)\geq x-\frac{x^3}{6}$$ to get
\[
\begin{split}
b_\del(|\bm\xi|) \geq & \frac{2\pi a }{\del} \int_{0}^{\pi/2} \cos(\phi)\sin(2\phi) d\phi \int_0^1 r^3 \hat\omega(r) dr  \\
&\quad -  \frac{2\pi a^3 }{6\del} \int_{0}^{\pi/2} \cos^3(\phi)\sin(2\phi) d\phi \int_0^1 r^5 \hat\omega(r) dr \\
\geq &  \frac{ a }{ \del } C  =C |\bm\xi|\,,
\end{split}
\]
where $C$ is a constant independent of $\delta$.
For $a\in [1, {4\pi}/{\ep}]$ where $\ep$ is the parameter defined in the Assumption \ref{assu:frackernel}, since the integral defined above is positive, it then has a lower bound, namely,  we have
\[
b_\del(|\bm\xi|) \geq  \frac{\tilde C}{\del} \geq \frac{\tilde C \ep}{4\pi} |\bm\xi|\,.
\]
Now for $a> {4\pi }/{\ep}$, we have 
$\cos(\phi) a > {2\pi }/{\ep}$ for $\phi\in(0,\pi/3)$. 
 We then write
\[
\begin{split}
b_\del(|\bm\xi|) & \geq \frac{2\pi}{\del} \int_{0}^{\pi/3} \sin(2\phi) \int_0^1 r^2 \hat\omega(r) \sin(r \cos(\phi) a) dr d\phi  \\
& \geq \frac{2\pi}{\del} \int_{0}^{\pi/3} \sin(2\phi) \left( \int_0^{\frac{2\pi}{\cos(\phi) a}}+\int_{\frac{2\pi}{\cos(\phi) a}}^1 \right) r^2 \hat\omega(r) \sin(r \cos(\phi) a) dr d\phi\,.
\end{split}
\]
Using Lemma \ref{lem:positivity} and the nonincreasing assumption, we observe that
\[
\int_{\frac{2\pi}{\cos(\phi) a}}^1 r^2 \hat\omega(r) \sin(r \cos(\phi) a) dr = \int_0^{1-\frac{2\pi}{\cos(\phi) a}}  h(r)\sin(r \cos(\phi) a) dr \geq 0\,,
\]
where 
\[
h(r): = ( r +\frac{2\pi}{\cos(\phi) a})^2 \hat\omega( r +\frac{2\pi}{\cos(\phi) a})  
\]
is a nonincreasing function.  Then one can show
by using \eqref{frackernel} that
\[ 
\begin{split}
b_\del(|\bm\xi|) &\geq  \frac{2\pi}{\del} \int_{0}^{\pi/3} \sin(2\phi)  \int_0^{\frac{2\pi}{\cos(\phi) a}}r^2 \hat\omega(r) \sin(r \cos(\phi) a) dr d\phi \\
&\geq  \frac{2\pi a^\beta}{\del} \int_{0}^{\pi/3}\cos^{\beta}(\phi) \sin(2\phi) d\phi  \int_0^{2\pi }\frac{1}{r^{1+\beta}}\sin(r ) dr \\
&\geq \frac{C |\bm\xi|^\beta}{\delta^{1-\beta}}
\end{split}
\]
Thus we obtain \eqref{estimate_p}.  %\qed
\end{proof}

An interesting consequence is that, under the specific choices of the
kernel, we see the equivalence 
of a vector field being either locally divergence-free  or
nonlocally divergence-free.

\begin{coro}
  \label{coro:equiv}
Assume that the kernels $\omega_\del$ and $\hat\omega_\del$ satisfy Assumptions~\ref{assu:frackernel}
and \ref{assumption}.
Then, in the distribution sense over the periodic cell,  a periodic and square integrable
 vector field $\bfu$ satisfies $\nabla \cdot \bfu \equiv 0$
if and only if $\mcD_\delta \bfu \equiv 0$. In other  words, we have the  following equivalent function spaces:
$$\{\bfu \in L^2(\Omega)\;:\;  \nabla \cdot \bfu \equiv 0,\;\mbox{in}\;  \Omega.\} 
\equiv  \{\bfu \in L^2(\Omega)\;:\; \mcD_\delta \bfu \equiv 0,\; \mbox{in}\;  \Omega.\} $$
\end{coro}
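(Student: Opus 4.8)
The plan is to reduce the statement to a mode-by-mode comparison of Fourier coefficients, exploiting that both $\nabla\cdot$ and $\mcD_\delta$ are Fourier multipliers acting only along the single direction $\bm\xi/|\bm\xi|$. First I would record that, for a periodic $\bfu\in [L^2(\Od)]^d$ with Fourier coefficients $\widehat{\bfu}(\bm\xi)$, the distributional divergence $\nabla\cdot\bfu$ has Fourier coefficients $i\,\bm\xi\cdot\widehat{\bfu}(\bm\xi)$, so $\nabla\cdot\bfu\equiv 0$ in $\mathcal{D}'(\Od)$ if and only if $\bm\xi\cdot\widehat{\bfu}(\bm\xi)=0$ for every $\bm\xi\in\Z^d$ (the mode $\bm\xi=\bm{0}$ being automatic). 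Next I would extend Lemma~\ref{lem:fourier}, together with \eqref{coef_b_2}, from smooth functions to all of $[L^2(\Od)]^d$ by density, so that $\widehat{\mcD_\delta\bfu}(\bm\xi)=i\,\bfb_\delta(\bm\xi)\cdot\widehat{\bfu}(\bm\xi)=i\,b_\delta(|\bm\xi|)\,\tfrac{\bm\xi}{|\bm\xi|}\cdot\widehat{\bfu}(\bm\xi)$ for $\bm\xi\neq\bm{0}$ and $\widehat{\mcD_\delta\bfu}(\bm{0})=0$ (since $\bfb_\delta(\bm{0})=\bm{0}$); thus $\mcD_\delta$ is a well-defined bounded map from $[L^2(\Od)]^d$ into a space of periodic distributions, and $\mcD_\delta\bfu\equiv 0$ in $\mathcal{D}'(\Od)$ iff $b_\delta(|\bm\xi|)\,\tfrac{\bm\xi}{|\bm\xi|}\cdot\widehat{\bfu}(\bm\xi)=0$ for every $\bm\xi\neq\bm{0}$.

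The extension in the previous step requires knowing that the symbol $\bfb_\delta(\bm\xi)$ has at most polynomial growth in $|\bm\xi|$. I would obtain this from $|\bfb_\delta(\bm\xi)|\le d\,|\bm\xi|$ (using $|\sin(\bm\xi\cdot\bs)|\le |\bm\xi||\bs|$ in \eqref{coef_b} together with the moment condition \eqref{hodmnt}) when one needs control near $\bm\xi=\bm{0}$, and, for large $|\bm\xi|$, by splitting the integral defining $b_\delta(|\bm\xi|)$ at radius $1/|\bm\xi|$ and invoking the fractional form \eqref{frackernel} of $\hat\omega$ near the origin; this yields $|\bfb_\delta(\bm\xi)|\lesssim 1+|\bm\xi|^{\max(0,\beta)}$ with $\beta<1$ (a logarithmic factor appearing only when $\beta=0$), which is more than enough to define $\mcD_\delta\bfu$ distributionally and to legitimize the multiplier identity.

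With these two descriptions in hand, the core argument is one line. By the proof of Theorem~\ref{thm:existence}, since $r^{d-1}\hat\omega(r)$ is nonincreasing (Assumption~\ref{assu:frackernel}(1)), Lemma~\ref{lem:positivity} applied to \eqref{coef_b_scalar} gives $b_\delta(|\bm\xi|)>0$ for every $\bm\xi\neq\bm{0}$. Hence, for each such $\bm\xi$, the constraint $b_\delta(|\bm\xi|)\,\tfrac{\bm\xi}{|\bm\xi|}\cdot\widehat{\bfu}(\bm\xi)=0$ is equivalent to $\bm\xi\cdot\widehat{\bfu}(\bm\xi)=0$. Comparing with the characterization of the local condition, $\mcD_\delta\bfu\equiv 0$ and $\nabla\cdot\bfu\equiv 0$ are cut out by exactly the same family of linear constraints on $\{\widehat{\bfu}(\bm\xi)\}_{\bm\xi\in\Z^d}$, so the two conditions are equivalent and the two function spaces coincide.

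I expect the main obstacle not to be the core multiplier comparison — which is immediate once $b_\delta\neq 0$ — but the bookkeeping needed to make the Fourier-symbol identity for $\mcD_\delta$ rigorous on all of $[L^2(\Od)]^d$ rather than on a dense subclass: establishing the polynomial growth of $\bfb_\delta$, and thereby assigning a precise distributional meaning to $\mcD_\delta\bfu$ (which is genuinely needed when $\hod$ fails to be integrable, i.e. $\beta\ge 0$), and ensuring that $\nabla\cdot\bfu$ and $\mcD_\delta\bfu$ are compared within the same space of periodic distributions. This is where Assumption~\ref{assu:frackernel}(2) enters, beyond the positivity that Lemma~\ref{lem:positivity} already supplies.
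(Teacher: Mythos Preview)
Your proposal is correct and follows essentially the same approach as the paper, which simply remarks that the corollary ``follows immediately from the established positivity of the scalar coefficient $b_\delta(\bm\xi)$ for any $\bm\xi\neq \mathbf{0}$.'' Your additional care in justifying the Fourier multiplier identity for $\mcD_\delta$ on all of $[L^2(\Od)]^d$ via polynomial growth of $\bfb_\delta$ goes beyond what the paper makes explicit, but the core argument---reducing both conditions to the mode-wise constraint $\bm\xi\cdot\widehat{\bfu}(\bm\xi)=0$ via \eqref{coef_b_2} and the positivity furnished by Lemma~\ref{lem:positivity}---is identical.
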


The above result follows immediately from the 
established positivity of the scalar
 coefficient $b_\delta(\bm\xi)$ for any $\bm\xi\neq \mathbf{0}$.

{
In addition, we may also use the composition of nonlocal divergence and nonlocal gradient to 
get  a nonlocal Laplacian to replace the operator $\mcL_\delta$ in the nonlocal  model \eqref{eq:stokes-nonlocal}.
Similar argument can be adopted to show the well-posedness of the resulting system. We state the conclusion below
without proof.}

\begin{thm}
  \label{thm:existence1}
  {
Assume that the kernel $\hat\omega_\del$ satisfies Assumption \ref{kernel_rescale}.
Given $\delta>0$, 
there exists a unique solution $(\bfu, p)$
to the following modified nonlocal Stokes system 
\begin{align}
  \label{eq:mod-stokes-nonlocal}
\left\{\begin{array}{l}
\D  - \nu \mcD_\delta  \mcG_\delta \bfu_\delta (\bx) + \mcG_\delta p_\delta (\bx) = \bff(\bx), \qquad \bx\in \Om, \\[.2cm]
\D - \mcD_\delta \bfu_\delta(\bx) =0,  \qquad \bx\in \Om, 
\end{array}\right.
\end{align}
with periodic boundary condition and normalization conditions of the type given in \eqref{eq:constraint}.
In addition, with $C$ independent of $\del$ and $\bff$ we have
\begin{align}
 \label{estimate_u_m}  &\|\bfu\|_{[\cV_\del(\Od)]^d} \leq  C\|\bff\|_{[\cV_\del^\ast(\Od)]^d}   \\
 \label{estimate_p_m} &\| p\|_{L^2(\Od)} \leq C \| \bff \|_{[H^{-\beta}(\Od)]^d}\,, 
\end{align}
where $\cV_\del(\Od)$ is the Hilbert space associated with the norm $\|\bfu \|=\{\|\bfu\|^2_{L^2(\Od)} + \| \mcG_\delta \bfu\|^2_{L^2(\Od)}\}^{1/2}$ and  $\cV_\del^\ast(\Od)$ is its dual space, and 
$\beta\in(-1,1)$ is the exponent defined through \eqref{frackernel}.
}
\end{thm}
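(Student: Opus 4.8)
The plan is to mimic the Fourier-analytic proof of Theorem~\ref{thm:existence}, replacing the symbol $\lambda_\del(\bm\xi)$ of $\mcL_\delta$ by the symbol of the composition $\mcD_\delta\mcG_\delta$. From Lemma~\ref{lem:fourier} we have $\widehat{\mcG_\delta p}(\bm\xi)=i\bfb_\del(\bm\xi)\widehat p(\bm\xi)$ and $\widehat{\mcD_\delta\bfu}(\bm\xi)=i(\bfb_\del(\bm\xi))^T\widehat\bfu(\bm\xi)$, so that $\widehat{\mcD_\delta\mcG_\delta\bfu}(\bm\xi)=-(\bfb_\del(\bm\xi)\otimes\bfb_\del(\bm\xi))\,\widehat\bfu(\bm\xi)=-|\bfb_\del(\bm\xi)|^2\,P_{\bm\xi}\,\widehat\bfu(\bm\xi)$, where $P_{\bm\xi}$ is the orthogonal projection onto $\mathrm{span}\{\bm\xi\}$ (using $\bfb_\del(\bm\xi)=b_\del(|\bm\xi|)\bm\xi/|\bm\xi|$ from \eqref{coef_b_2}). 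The Fourier-transformed system then becomes the $(d+1)\times(d+1)$ block matrix
\begin{equation*}
\widetilde A_\del(\bm\xi)=\left(\begin{array}{cc} \nu\, b_\del(|\bm\xi|)^2\, P_{\bm\xi} & i\bfb_\del(\bm\xi) \\ -i(\bfb_\del(\bm\xi))^T & 0\end{array}\right)\,.
\end{equation*}
The first step is therefore to record this reduction and to note that, unlike $\lambda_\del(\bm\xi)I_d$, the top-left block is only positive semidefinite: it vanishes on the subspace orthogonal to $\bm\xi$. This is the structural novelty and the place where a little care is needed.

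The second step is to invert $\widetilde A_\del(\bm\xi)$ for each fixed $\bm\xi\neq\mathbf 0$. Decompose $\widehat\bfu=\widehat\bfu_\parallel+\widehat\bfu_\perp$ according to $P_{\bm\xi}$. The incompressibility row $-i(\bfb_\del)^T\widehat\bfu=0$ forces $\widehat\bfu_\parallel=0$, exactly as in the original proof. On the orthogonal complement the momentum equation reads $\nu\, b_\del^2\,\widehat\bfu_\perp + i\bfb_\del\widehat p = \widehat\bff_\perp$; projecting onto $\bm\xi^\perp$ kills the pressure term (since $\bfb_\del\parallel\bm\xi$) and gives $\widehat\bfu_\perp=\frac{1}{\nu b_\del^2}(I_d-P_{\bm\xi})\widehat\bff$, while projecting onto $\bm\xi$ gives $i\bfb_\del\widehat p = P_{\bm\xi}\widehat\bff$, i.e. $\widehat p = -i\,(\bfb_\del)^T\widehat\bff/|\bfb_\del|^2$. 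So $\widetilde A_\del(\bm\xi)$ is invertible precisely when $b_\del(|\bm\xi|)\neq0$, which holds for all $\bm\xi\neq\mathbf0$ under Assumption~\ref{assu:frackernel} by Lemma~\ref{lem:positivity}; in fact the pressure formula is identical to the one in Theorem~\ref{thm:existence}, so the bound \eqref{estimate_p_m} is obtained verbatim from the lower bounds on $b_\del(|\bm\xi|)$ already established there (the three regimes $a<1$, $a\in[1,4\pi/\ep]$, $a>4\pi/\ep$).

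The third step is the velocity estimate \eqref{estimate_u_m}, which is where the norm $\|\bfu\|_{\cV_\del}^2=\|\bfu\|_{L^2}^2+\|\mcG_\delta\bfu\|_{L^2}^2$ enters. By Plancherel, $\|\mcG_\delta\bfu\|_{L^2}^2=\sum_{\bm\xi\neq0}|\bfb_\del(\bm\xi)|^2|\widehat p\text{-type terms}|$... more precisely one has, for the solution, $\widehat{\mcG_\delta\bfu_\perp}=i\bfb_\del\cdot 0=0$ since $\bfb_\del\parallel\bm\xi\perp\widehat\bfu_\perp$ — wait, $\mcG_\delta$ acts on the scalar components; the cleanest route is to define $\cV_\del$ abstractly through its Fourier symbol $m_\del(\bm\xi):=(1+b_\del(|\bm\xi|)^2)^{1/2}$ acting componentwise, check this is a norm and that $\cV_\del$ is a Hilbert space continuously embedded in $[L^2]^d$, and then observe that the solution operator $\bff\mapsto\bfu$ has Fourier multiplier $\frac{1}{\nu b_\del^2}(I_d-P_{\bm\xi})$, whose $\cV_\del\leftarrow\cV_\del^\ast$ operator norm is $\sup_{\bm\xi}\frac{m_\del(\bm\xi)^2}{\nu b_\del(|\bm\xi|)^2}=\sup_{\bm\xi}\frac{1+b_\del^2}{\nu b_\del^2}$; this is finite and bounded independently of $\delta$ provided $b_\del(|\bm\xi|)$ is bounded below uniformly in $\delta$ on the relevant frequency range, which again follows from the estimates in the proof of Theorem~\ref{thm:existence}. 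Uniqueness follows from invertibility of each $\widetilde A_\del(\bm\xi)$, and existence from summing the Fourier series, whose convergence is guaranteed by the $[L^2]^d$ bound.

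I expect the main obstacle to be purely bookkeeping rather than conceptual: one must set up the function space $\cV_\del$ and its dual carefully enough that the multiplier bounds translate into the stated $\delta$-uniform estimates, and one must be slightly careful that the semidefiniteness of the top-left block (it annihilates $\bm\xi^\perp$... no, it annihilates the $\bm\xi$-direction) does not obstruct invertibility — it does not, because the incompressibility constraint removes exactly the direction on which the block degenerates, so the kernel of the block and the range of the constraint are complementary. Once that transversality is made explicit, the remaining estimates are formally the same as, or weaker than, those already proved for $\mcL_\delta$, so the "without proof" remark in the paper is fully justified.
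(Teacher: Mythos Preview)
There is a genuine error in your computation of the Fourier symbol of $\mcD_\delta\mcG_\delta$. The operator $\mcD_\delta\mcG_\delta$ is the scalar nonlocal Laplacian (divergence of gradient), applied componentwise to $\bfu_\delta$; its symbol on each component is $(i\bfb_\del)^T(i\bfb_\del)=-|\bfb_\del(\bm\xi)|^2=-b_\del(|\bm\xi|)^2$, so that
\[
\widehat{\mcD_\delta\mcG_\delta\bfu}(\bm\xi)=-b_\del(|\bm\xi|)^2\,\widehat\bfu(\bm\xi),
\]
i.e.\ a scalar multiple of the identity, \emph{not} the rank-one matrix $-(\bfb_\del\otimes\bfb_\del)$. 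What you wrote is the symbol of $\mcG_\delta\mcD_\delta$ (gradient of divergence), which is a different operator. Consequently the top-left block of $\widetilde A_\del(\bm\xi)$ is $\nu\,b_\del(|\bm\xi|)^2 I_d$, which is positive definite whenever $b_\del\neq 0$; there is no semidefiniteness issue and the entire transversality discussion is spurious.

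This matters because with your (incorrect) rank-one block the system would in fact be \emph{ill-posed}: once the constraint forces $\widehat\bfu_\parallel=0$, the momentum equation becomes $\nu b_\del^2 P_{\bm\xi}\widehat\bfu_\perp + i\bfb_\del\widehat p=\widehat\bff$, and since $P_{\bm\xi}\widehat\bfu_\perp=0$ this reduces to $i\bfb_\del\widehat p=\widehat\bff$, which forces $\widehat\bff_\perp=0$ and leaves $\widehat\bfu_\perp$ completely undetermined. Your subsequent algebra silently drops the projector and treats the block as $\nu b_\del^2 I_d$, which is why you recover the correct solution formulas; but the reasoning is internally inconsistent. With the correct symbol, $\widetilde A_\del(\bm\xi)$ has exactly the form of $A_\del(\bm\xi)$ in Theorem~\ref{thm:existence} with $\lambda_\del(\bm\xi)$ replaced by $\nu\,b_\del(|\bm\xi|)^2$, the inverse is given by \eqref{matrix_inverse} with that substitution, the pressure formula and bound are literally unchanged, and the velocity bound \eqref{estimate_u_m} follows because the multiplier $1/(\nu b_\del^2)$ is precisely what the $\cV_\del\leftarrow\cV_\del^\ast$ norm absorbs. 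That is the ``similar argument'' the paper alludes to.
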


{
\begin{remark}
Naturally,
we can also establish the well-posedness of the Poisson equation corresponding
to the nonlocal Laplacian $ \mcD_\delta  \mcG_\delta$, just like their local counterparts, under the same conditions given in the above theorem.
In fact, this is the
 usual practice, in the context of solid mechanics, of the correspondence model of peridynamic materials. 
 The study of well-posedness of the latter formulation \cite{dt17} is similar to that carried out there. For the scalar equation,
 the resulting nonlocal interactions  encoded in $\mcL_\delta= \mcD_\delta  \mcG_\delta$
 involve both repulsive and attractive types which is different from the $\mcL_\delta$ operator 
 used in \eqref{eq:op-nonlocal} that features only
 repulsive interactions.
We note that this is also relevant to practical incompressible SPH as
the pressure correction often relies on a well-posed Poisson equation. 
Thus, in case that the kernels  for $\mcD_\delta$ and $\mcG_\delta$ do not have strengthened nearby interactions,  the pressure correction step using $ \mcD_\delta  \mcG_\delta$
might become ill-posed which would also impact the convergence and robustness of the numerical solution. Indeed, it
has been noted that the composition of SPH divergence and SPH gradient leads to a discretization that are sensitive to
particle distributions \cite{cum99,SL03}. From our analysis, we can see that it is no surprise that such phenomenon
does occur as the kernels in the SPH derivatives do not exhibit strong
nearby interactions.
 \end{remark}}

Unlike the case of local elliptic systems, the solutions to nonlocal 
Stokes equation may or may not be more regular than the data ($\bff$ in our case), depending on the specific forms of the nonlocal operators. 
For $\omega=\omega(|\bx|)$ integrable, we can only show that
 the velocity $\bfu_\delta$ remains in $L^2$ if the data $\bff$ 
is also in $L^2$.
On the other hand, in some special cases where
 $\omega=\omega(|\bx|)$ exhibits sufficient singular behavior, 
we can expect some fractional order regularity pick-up. 
For later references, these results are stated below.

\begin{prop}
  \label{thm:pickup}
Assume that the kernels $\omega_\del$ and $\hat\omega_\del$ satisfy 
Assumptions~\ref{assu:frackernel} 
and \ref{kernel_rescale}
with $\omega_\del$ being a
 rescaling of $\omega$. Let $\bfu_\delta$ be the velocity component of the solution to the nonlocal Stokes equation \eqref{eq:stokes-nonlocal}.
Without loss of generality, we also only consider $\delta\in (0,1)$.
If  $\omega(|\bx|)$ is integrable in $\bx$, then we have
\begin{equation} \label{estimate_u1}
 \|\bfu_\delta\|_{[L^2(\Od)]^d}\leq  C\|\bff\|_{[L^2(\Od)]^d} \,.
\end{equation} 
If instead,
\begin{equation} \label{fracdiffusionkernel}
\omega(r)  \geq 
\frac{m}{r^{d+2\al}}
%\leq \frac{M}{r^{d+2\al}}\,, 
\quad \forall r\in (0, 1),
\end{equation} 
for some $\al \in (0,1)$ and a constant $m\in\R^+$, then we have
\begin{equation} \label{estimate_u2}
 \| \bfu_\del\|_{[H^\al(\Od)]^d} \leq C \| \bff\|_{[H^{-\al}(\Od)]^d}\,,
\end{equation}
where $C$ is independent of $\del$ and $\bff$.
\end{prop}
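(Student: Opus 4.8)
The plan is to reduce both estimates to a single lower bound on the Fourier symbol $\lambda_\del(\bm\xi)$ of $\mcL_\del$ and then read off the conclusion from Plancherel's identity. By Theorem~\ref{thm:existence} the solution exists and its velocity component satisfies $\widehat{\bfu}_\del(\bm\xi)=\lambda_\del(\bm\xi)^{-1}\big(I_d-\bfb_\del(\bm\xi)\otimes\bfb_\del(\bm\xi)/|\bfb_\del(\bm\xi)|^2\big)\widehat{\bff}(\bm\xi)$ for $\bm\xi\in\Z^d\setminus\{\mathbf{0}\}$; the matrix in parentheses is the orthogonal projection onto $\bfb_\del(\bm\xi)^\perp$, hence has operator norm $1$ regardless of $\bm\xi$ and $\del$, so $|\widehat{\bfu}_\del(\bm\xi)|\leq|\widehat{\bff}(\bm\xi)|/\lambda_\del(\bm\xi)$. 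It therefore suffices to establish, with $c>0$ independent of $\del\in(0,1)$ and of $\bm\xi\in\Z^d\setminus\{\mathbf{0}\}$: (a) $\lambda_\del(\bm\xi)\geq c$ when $\omega$ is integrable; (b) $\lambda_\del(\bm\xi)\geq c|\bm\xi|^{2\al}$ when \eqref{fracdiffusionkernel} holds. Bound (a) gives \eqref{estimate_u1} immediately; for \eqref{estimate_u2} one uses $|\bm\xi|\geq1$ on $\Z^d\setminus\{\mathbf{0}\}$, so $c|\bm\xi|^{2\al}\leq(1+|\bm\xi|^2)^{\al}\leq2^{\al}|\bm\xi|^{2\al}$, and then $\sum_{\bm\xi}(1+|\bm\xi|^2)^{\al}|\widehat{\bfu}_\del(\bm\xi)|^2\leq C\sum_{\bm\xi}|\bm\xi|^{-2\al}|\widehat{\bff}(\bm\xi)|^2\leq C\sum_{\bm\xi}(1+|\bm\xi|^2)^{-\al}|\widehat{\bff}(\bm\xi)|^2$.

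The crux is the rescaling identity. Substituting $\bm{s}=\del\bm{r}$ in \eqref{coef_lamb} and using \eqref{kernel_rescale} gives
\[
\lambda_\del(\bm\xi)=\frac{1}{\del^2}\,\lambda_1(\del\bm\xi),\qquad \lambda_1(\bm\eta):=\int_{|\bm{r}|\leq1}\omega(|\bm{r}|)\big(1-\cos(\bm\eta\cdot\bm{r})\big)\,d\bm{r},
\]
so everything is governed by the unit-horizon symbol $\lambda_1$, for which I would prove two facts. First, a small-frequency bound: for $|\bm\eta|\leq1$ one has $|\bm\eta\cdot\bm{r}|\leq1$ on $|\bm{r}|\leq1$, hence $1-\cos(\bm\eta\cdot\bm{r})\geq c_0(\bm\eta\cdot\bm{r})^2$ with $c_0=1-\cos1>0$, and by the radial symmetry of $\omega$ together with the rescaled moment identity \eqref{omdmnt} one gets $\int_{|\bm{r}|\leq1}\omega(|\bm{r}|)(\bm\eta\cdot\bm{r})^2\,d\bm{r}=2|\bm\eta|^2$, so $\lambda_1(\bm\eta)\geq2c_0|\bm\eta|^2$. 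Second, a large-frequency bound on $\{|\bm\eta|\geq1\}$: in the integrable case $\int\omega(|\bm{r}|)\cos(\bm\eta\cdot\bm{r})\,d\bm{r}\to0$ as $|\bm\eta|\to\infty$ by Riemann--Lebesgue, so $\lambda_1(\bm\eta)\to\int_{|\bm{r}|\leq1}\omega>0$ (positive since $\omega\not\equiv0$); since $\lambda_1$ is continuous and strictly positive away from the origin (as noted after \eqref{coef_lamb_polar}), $\inf_{|\bm\eta|\geq1}\lambda_1(\bm\eta)=:c_2>0$. In the fractional case, the substitution $\bm{s}=|\bm\eta|\bm{r}$ gives $\lambda_1(\bm\eta)\geq m|\bm\eta|^{2\al}\int_{|\bm{s}|\leq|\bm\eta|}|\bm{s}|^{-d-2\al}(1-\cos(\hat{\bm\eta}\cdot\bm{s}))\,d\bm{s}\geq m|\bm\eta|^{2\al}\int_{|\bm{s}|\leq1}|\bm{s}|^{-d-2\al}(1-\cos(\hat{\bm\eta}\cdot\bm{s}))\,d\bm{s}=:c_3|\bm\eta|^{2\al}$, where $\hat{\bm\eta}=\bm\eta/|\bm\eta|$ and the last integral is a finite positive constant, independent of $\hat{\bm\eta}$ by rotational invariance (finite because $1-\cos(\hat{\bm\eta}\cdot\bm{s})\lesssim|\bm{s}|^2$ near the origin and $2-2\al>0$).

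Finally I would combine these through a dichotomy on the size of $\del|\bm\xi|$. If $\del|\bm\xi|\leq1$, then $\lambda_\del(\bm\xi)=\del^{-2}\lambda_1(\del\bm\xi)\geq2c_0|\bm\xi|^2$, which is $\geq2c_0$ in the integrable case and $\geq2c_0|\bm\xi|^{2\al}$ in the fractional case (using $|\bm\xi|\geq1$ and $2\al\leq2$). If $\del|\bm\xi|>1$, then in the integrable case $\lambda_\del(\bm\xi)\geq\del^{-2}c_2\geq c_2$ since $\del<1$, while in the fractional case $\lambda_\del(\bm\xi)\geq\del^{-2}c_3(\del|\bm\xi|)^{2\al}=c_3\del^{2\al-2}|\bm\xi|^{2\al}\geq c_3|\bm\xi|^{2\al}$, again since $\del<1$ and $2\al<2$. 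This yields (a) and (b), hence \eqref{estimate_u1} and \eqref{estimate_u2}. The only genuinely delicate point is the large-frequency analysis of $\lambda_1$, namely the direction-uniform positivity of the fractional integral and the uniform positivity at large frequencies in the integrable case; the small-frequency behavior and the bookkeeping in $\del$ are routine once the rescaling identity is in hand.
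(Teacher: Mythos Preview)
Your proposal is correct and follows essentially the same approach as the paper: both arguments bound $|\widehat{\bfu}_\del(\bm\xi)|\leq|\widehat{\bff}(\bm\xi)|/\lambda_\del(\bm\xi)$, reduce via rescaling to a dichotomy on $a=\del|\bm\xi|$, use a Taylor-type lower bound on $1-\cos$ for $a\leq1$, invoke Riemann--Lebesgue for the integrable case when $a\geq1$, and apply a scaling substitution for the fractional lower bound. Your presentation is somewhat more streamlined---working dimension-free with the rescaling identity $\lambda_\del(\bm\xi)=\del^{-2}\lambda_1(\del\bm\xi)$ rather than in polar coordinates for $d=3$ as the paper does---but the substance is the same.
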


\begin{proof} We follow the proof of the theorem~\ref{thm:existence}.
Without loss of generality, we take the case $d=3$ subject to the additional assumptions of the kernel $\omega(r)$.  From the 
definition of $\lambda_\del(\bm\xi)$, we have
\[
\begin{split}
\lambda_\del(\bm\xi)& \geq 4\pi \int_{0}^{\pi/3} \sin(\phi) \int_0^\del r^2 \omd(r)\big(1- \cos(r \cos(\phi)|\bm\xi|) \big) dr d\phi  \\
&= \frac{4\pi  }{\del^2} \int_{0}^{\pi/3} \sin(\phi) \int_0^1 r^2 \omega(r)\big(1- \cos(r \cos(\phi) \del |\bm\xi|) \big) dr d\phi  
\end{split}
\]
 Now define $a=\del |\bm\xi|$. For $a<1$, we use 
 \[
 \cos(x)\leq 1-\frac{x^2}{2}+\frac{x^4}{24}
 \]
  to get
\[
\begin{split}
\lambda_\del(\bm\xi) \geq  &\frac{4\pi  a^2 }{\del^2} \int_{0}^{\pi/3} \cos^2(\phi)\sin(\phi) d\phi   \int_0^1 r^4 \omega(r) dr \\
& \quad - \frac{4\pi  a^4 }{24 \del^2} \int_{0}^{\pi/3} \cos^4(\phi)\sin(\phi) d\phi   \int_0^1 r^6 \omega(r) dr \\
\geq &  C \frac{ a^2 }{\del^2} =C |\bm\xi|^2\,,
\end{split}
\]
where $C$ is a constant independent of $\del$.
Now we consider the case $a=\del |\bm\xi|\geq1$. Since it is true that for any finite $a$,  $\lambda_\del({\bm\xi})$ is a positive number in the form of $C(a)/\del^2$, where $C(a)$ depends only on $a$,
then $\lambda_\del(\bm\xi)$ has a lower bound $\tilde C/\del^2$ for $a$ belongs to a finite interval with $\tilde C$ being a constant independent of $\del$. 
 
 For the case that $\omega(|\bx|)$ is integrable in $\bx$, we have $r^2\omega(r)$ is integrable in $r$. By using the Riemann Lemma, we can see that  $\lambda_\del (\bm\xi)$ goes to $C/\del^2$ for some constant $C$ as $a\to \infty$.
Thus we have shown \eqref{estimate_u1}. 

As for the case that $\omega(r)$ satisfies \eqref{fracdiffusionkernel}, we have
\[
\begin{split}
\lambda_\del(\bm\xi)&\geq  \frac{4\pi  }{\del^2} \int_{0}^{\pi/3} \sin(\phi) \int_0^1 r^2 \omega(r)\big(1- \cos(r \cos(\phi) \del |\bm\xi|) \big) dr d\phi  \\
&= \frac{4\pi a^{2\al}}{\del^{2}} \int_{0}^{\pi/3} \sin(\phi)  \cos^{2\al}(\phi) \int_0^{\cos(\phi) a} \frac{1}{r^{1+2\al}}\big(1- \cos(r) \big) dr d\phi \\
&\geq \frac{4\pi a^{2\al}}{\del^{2}} \int_{0}^{\pi/3} \sin(\phi)  \cos^{2\al}(\phi) d\phi \int_0^{a/2} \frac{1}{r^{1+2\al}}\big(1- \cos(r) \big) dr\\
& = \frac{C}{\del^{2-2\al}} |\bm\xi|^{2\al}  \,, 
\end{split}
\]
for $a\geq1$. Thus we obtain \eqref{estimate_u2}.
%\qed
\end{proof}

\begin{remark}
With the conditions in  the Assumption~\ref{assu:frackernel}
on the kernel $\hat\omega_\del(r)$, and the more singular behavior imposed in \eqref{frackernel}, we do 
get some regularity pickup on the pressure given in \eqref{estimate_p}.
\end{remark}

Before ending this section, 
we present some additional regularity estimates on the nonlocal solutions for smoother data by observing that 
the nonlocal operators  commute with any local differential operators (and their fractional powers, defined via
the spectrum decomposition)
in the periodic setting. Moreover, we notice that 
the constants in the estimates given in the Theorem~\ref{thm:existence} and Proposition~\ref{thm:pickup}
 are independent of $\del$ and $\bff$, so we can get uniform
regularity estimates stated in the
following corollary.

\begin{coro}
  \label{thm:regularity}
Assume that the kernels $\omega_\del$ and $\hat\omega_\del$ satisfy 
Assumptions~\ref{assu:frackernel} 
and \ref{kernel_rescale}.
The solution $(\bfu_\delta, p_\delta)$
to the nonlocal Stokes system \eqref{eq:stokes-nonlocal} with periodic boundary condition satisfies that
for any partial (and possibly fractional) differential operators $\partial$ on the spatial variables of any nonnegative order,
\begin{align}
 \label{estimate_ur}  &\|\partial \bfu_\delta\|_{[\cS_\del(\Od)]^d} \leq  C\| \partial \bff\|_{[\cS_\del^\ast(\Od)]^d} \,,  \\
 \label{estimate_pr} &\|\partial p_\del\|_{L^2(\Od)} \leq C \| \partial \bff \|_{[H^{-\beta}(\Od)]^d}\,, 
\end{align}
where $C>0$ is a generic constant independent of $\del$, $\bff$ and $\partial$. Moreover, if $\omega(r)$ 
satisfies \eqref{fracdiffusionkernel} for some $\al \in (0,1)$ and constants $m, M \in\R^+$,  then
\begin{align}
 \label{estimate_urf}  &\|\partial \bfu_\delta\|_{[H^\al(\Od)]^d} \leq  C\| \partial \bff\|_{[H^{-\al}(\Od)]^d}\,,   \\
 \label{estimate_prf} &\|\partial p_\del\|_{L^2(\Od)} \leq C \| \partial \bff \|_{[H^{-\beta}(\Od)]^d}\,, 
\end{align}
for a generic constant $C>0$ that is  independent of $\del$, $\bff$ and $\partial$. 
\end{coro}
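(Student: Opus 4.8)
I would prove this corollary by a simple \emph{transfer principle}: in the periodic setting every operator in sight is diagonalized by the Fourier basis, so the estimates for $(\bfu_\delta,p_\delta)$ carry over verbatim to $(\partial\bfu_\delta,\partial p_\delta)$ once we recognize that the latter pair solves the same system with data $\partial\bff$. Concretely, a (possibly fractional) spatial partial differential operator $\partial$ of nonnegative order acts on periodic functions as $\widehat{\partial v}(\bm\xi)=m_\partial(\bm\xi)\widehat v(\bm\xi)$ for a scalar symbol $m_\partial$ (for instance $m_\partial(\bm\xi)=(i\bm\xi)^{\bm\alpha}$ or $m_\partial(\bm\xi)=|\bm\xi|^{2s}$), while by Lemma~\ref{lem:fourier} the nonlocal operators $\mcL_\del$, $\mcG_\del$, $\mcD_\del$ act through the multipliers $\lambda_\del(\bm\xi)$ and $\bfb_\del(\bm\xi)$. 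Since a scalar commutes with everything, applying $\partial$ to the two equations of \eqref{eq:stokes-nonlocal} shows that $(\partial\bfu_\delta,\partial p_\delta)$ solves the nonlocal Stokes system with $\bff$ replaced by $\partial\bff$; equivalently, starting from \eqref{eq:fouriercoef}--\eqref{matrix_inverse} one multiplies the $\bm\xi$-th coefficients by $m_\partial(\bm\xi)$ and commutes $m_\partial(\bm\xi)$ past $(A_\del(\bm\xi))^{-1}$ to conclude that the Fourier coefficients of $(\partial\bfu_\delta,\partial p_\delta)$ are given by \eqref{eq:fouriercoef} with $\widehat\bff$ replaced by $\widehat{\partial\bff}$. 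The zeroth Fourier mode is excluded throughout by \eqref{eq:constraint}, and this is consistent because a positive-order $\partial$ annihilates constants (and for order zero there is nothing to check).

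From here I would simply replay the earlier arguments. For \eqref{estimate_ur}--\eqref{estimate_pr} I reuse the pointwise-in-$\bm\xi$ bounds $|\widehat\bfu_\delta(\bm\xi)|\le C\lambda_\del(\bm\xi)^{-1}|\widehat\bff(\bm\xi)|$ and $|\widehat p_\delta(\bm\xi)|\le |b_\del(|\bm\xi|)|^{-1}|\widehat\bff(\bm\xi)|$ from the proof of Theorem~\ref{thm:existence}, now with $\widehat\bff$ replaced by $\widehat{\partial\bff}$; squaring, weighting the velocity sum by $\lambda_\del(\bm\xi)$ (this is exactly the $\cS_\del$-norm, and its reciprocal the $\cS_\del^\ast$-norm), using the lower bound on $b_\del$ proved there for the pressure, and summing over $\bm\xi\in\Z^d\setminus\{\mathbf 0\}$ gives the stated inequalities. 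Under the extra singularity hypothesis \eqref{fracdiffusionkernel}, the same substitution $\widehat\bff\mapsto\widehat{\partial\bff}$ in the proof of Proposition~\ref{thm:pickup}, which there upgrades the velocity bound via $\lambda_\del(\bm\xi)\ge c\,\delta^{2\al-2}|\bm\xi|^{2\al}$, yields \eqref{estimate_urf}, while \eqref{estimate_prf} is identical to \eqref{estimate_pr}. Crucially, the generic constant $C$ is literally the one produced in Theorem~\ref{thm:existence}/Proposition~\ref{thm:pickup}: its derivation only uses $\bm\xi$-uniform multiplier estimates, which are blind to the extra factor $m_\partial(\bm\xi)$, so $C$ remains independent of $\del$, $\bff$, and $\partial$.

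I do not expect a genuine obstacle: the result is essentially bookkeeping. The one point that needs care is to fix, once and for all, the Fourier-multiplier definitions of the spaces $\cS_\del(\Od)$, $\cS_\del^\ast(\Od)$, and $H^s(\Od)$, together with the definition of $\partial$ via the spectral decomposition of the periodic Laplacian, so that the two commutations invoked above --- $\partial$ with the nonlocal operators, and $\partial$ with the weights defining the norms --- hold exactly on the Fourier side rather than merely formally, and so that $\partial\bff$ lies in the relevant dual space whenever $\bff$ does. With those conventions in place, the corollary is a line-by-line repetition of the earlier proofs under $\bff\mapsto\partial\bff$.
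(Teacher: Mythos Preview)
Your proposal is correct and follows essentially the same approach as the paper: the authors simply observe that in the periodic setting the nonlocal operators commute with any (possibly fractional) local differential operator $\partial$ and then invoke Theorem~\ref{thm:existence} and Proposition~\ref{thm:pickup} with $\bff$ replaced by $\partial\bff$, noting that the constants there are independent of $\del$ and $\bff$. Your write-up is in fact more detailed than what the paper supplies, but the underlying argument is identical.
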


%%%%%%%%%%%%%%%%%%%%%%%%%%%%%%%%%%%%%%%%%%%%%%%%%%%%%%%%%%%%%%%
%%%%%%%%%%%%%%%%%%%%%%%%%%%%%%%%%%%%%%%%%%%%%%%%%%%%%%%%%%%%%%%
%%%%%%%%%%%%%%%%%%%%%%%%%%%%%%%%%%%%%%%%%%%%%%%%%%%%%%%%%%%%%%%

\section{{The local limit}}
\label{sec:vanish}
 
Since the nonlocal operators, as defined here,  are constructed to have the corresponding differential operators as the local limits
as the horizon
(smoothing length) $\del$ shrinks to zero, it is reasonable to expect that the limit of the 
nonlocal Stokes system \eqref{eq:stokes-nonlocal} recovers the conventional local Stokes system
as nonlocal effects vanish.  With the energy  estimates shown earlier, it is possible to derive rigorously the zero $\del$ limit of 
 \eqref{eq:stokes-nonlocal}. Moreover, we can
establish the convergence rate of the nonlocal solutions to its local counterpart as $\del\to0$ using Fourier analysis.

\begin{thm}
  \label{thm:converge}
  Assume that the kernels $\omega_\del$ and $\hat\omega_\del$ satisfy 
Assumptions~\ref{assu:frackernel} 
and \ref{kernel_rescale}.
Let $(\bfu, p$) be the solution of Stokes system \eqref{eq:stokes} and $(\bfu_\delta, p_\delta)$ be the solution of nonlocal Stokes system \eqref{eq:stokes-nonlocal}.
Under the periodic conditions and the Assumptions~\ref{assu:frackernel} 
and \ref{kernel_rescale}
 on the kernels, there is a constant
 $C$ independent of $\del $ and $\bff$ such that 
\begin{align}
  \|\bfu-\bfu_\delta\|_{[L^2(\Od)]^d}&\leq C\del^2 \|\bff\|_{[L^2(\Od)]^d}\label{error_u}\,.
\end{align}
and
\begin{align} \label{error_p}
 \|p-p_\delta\|_{L^2(\Od)}&\leq  C\delta^{\min\{2, 1+\eta\}}  \|\bff\|_{[H^{\eta}(\Od)]^d} \quad \text{for any } \eta\geq -\beta \,,
 \end{align}
 where $\beta\in(-1,1)$ is the exponent defined through \eqref{frackernel}.
  \end{thm}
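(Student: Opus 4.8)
The plan is to work entirely on the Fourier side, since under periodic boundary conditions both the local and the nonlocal Stokes systems diagonalize in the Fourier basis. First I would write down the exact Fourier coefficients of both solutions: for the local system one has $\widehat{\bfu}(\bm\xi) = \frac{1}{\nu|\bm\xi|^2}(I_d - \frac{\bm\xi\otimes\bm\xi}{|\bm\xi|^2})\widehat{\bff}(\bm\xi)$ and $\widehat{p}(\bm\xi) = -i\frac{\bm\xi}{|\bm\xi|^2}\cdot\widehat{\bff}(\bm\xi)$, while the nonlocal system gives, via \eqref{matrix_inverse} and \eqref{eq:fouriercoef}, $\widehat{\bfu}_\del(\bm\xi) = \frac{1}{\nu\lambda_\del(\bm\xi)}(I_d - \frac{\bm\xi\otimes\bm\xi}{|\bm\xi|^2})\widehat{\bff}(\bm\xi)$ (using \eqref{coef_b_2} so that $\bfb_\del$ is parallel to $\bm\xi$ and the projection is the same in both cases) and $\widehat{p}_\del(\bm\xi) = -i\frac{\bm\xi}{|\bm\xi|\,b_\del(|\bm\xi|)}\cdot\widehat{\bff}(\bm\xi)$. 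The key point is that the \emph{projection tensors agree exactly}, so the velocity error in each mode is controlled by $|\frac{1}{\lambda_\del(\bm\xi)} - \frac{1}{|\bm\xi|^2}|$ and the pressure error by $|\frac{1}{b_\del(|\bm\xi|)} - \frac{1}{|\bm\xi|}|$.

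The heart of the argument is then a pair of symbol estimates: (i) $|\lambda_\del(\bm\xi) - |\bm\xi|^2| \le C\del^2|\bm\xi|^4$, which combined with the lower bound $\lambda_\del(\bm\xi)\ge c|\bm\xi|^2$ from Proposition~\ref{thm:pickup} (or directly from \eqref{coef_lamb_polar}) yields $|\frac{1}{\lambda_\del(\bm\xi)} - \frac{1}{|\bm\xi|^2}| \le C\del^2|\bm\xi|^0$ after dividing; and (ii) $|b_\del(|\bm\xi|) - |\bm\xi||\le C\del^{\min\{2,1+\eta\}}|\bm\xi|^{1+\min\{1,\eta\}+\cdots}$, combined with the lower bound $b_\del(|\bm\xi|)\ge c|\bm\xi|$ for bounded frequencies and $b_\del(|\bm\xi|)\ge c\,\delta^{\beta-1}|\bm\xi|^\beta$ for large frequencies from Theorem~\ref{thm:existence}. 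For (i) I would use the moment normalization \eqref{omdmnt}: expanding $1-\cos(r\cos\phi|\bm\xi|)$ to second order in the polar expressions \eqref{coef_lamb_polar}, the leading term reproduces $|\bm\xi|^2$ exactly thanks to \eqref{omdmnt}, and the remainder is bounded by the fourth moment $\int r^{d+3}\omega_\del\,dr \sim \del^2$ times $|\bm\xi|^4$; care is needed when $\del|\bm\xi|$ is large, where one instead uses that both $\lambda_\del(\bm\xi)$ and $|\bm\xi|^2$ are comparable so the relative error is $O(1)$ but we only have $\del^2|\bm\xi|^2 \gtrsim 1$ there, so $\del^2$ still dominates $|\frac1{\lambda_\del}-\frac1{|\bm\xi|^2}| \lesssim \del^{-2}\cdot\del^4 = \del^2$. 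For (ii) I would similarly use the first-moment normalization \eqref{hodmnt} on $\sin(r\cos\phi|\bm\xi|)$; the subtlety is that because $\hat\omega$ is of fractional type \eqref{frackernel} near the origin, the "error" moment $\int r^{d+1}\hat\omega_\del$ may only be finite up to the order dictated by $\beta$, which is exactly where the exponent $\min\{2,1+\eta\}$ and the requirement $\eta\ge -\beta$ enter — one loses $\del^{1+\eta}$ rather than $\del^2$ when $\hat\omega$ is too singular for the third moment to converge, and the extra $|\bm\xi|^\eta$ on the data is needed to absorb the corresponding frequency growth.

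Once the two symbol estimates are in hand, the conclusion is Parseval: $\|\bfu - \bfu_\del\|_{L^2}^2 = \frac{1}{(2\pi)^d}\sum_{\bm\xi\ne 0}|\widehat{\bfu}(\bm\xi) - \widehat{\bfu}_\del(\bm\xi)|^2 \le C\del^4\sum_{\bm\xi\ne 0}|\widehat{\bff}(\bm\xi)|^2 = C\del^4\|\bff\|_{L^2}^2$, giving \eqref{error_u}; and likewise $\|p - p_\del\|_{L^2}^2 \le C\del^{2\min\{2,1+\eta\}}\sum_{\bm\xi\ne 0}|\bm\xi|^{2\eta}|\widehat{\bff}(\bm\xi)|^2 = C\del^{2\min\{2,1+\eta\}}\|\bff\|_{H^\eta}^2$, giving \eqref{error_p}. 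I expect the main obstacle to be (ii), specifically the bookkeeping of the frequency powers in the regime $\del|\bm\xi|$ large together with the fractional singularity of $\hat\omega$: one must split the $r$-integral at $r \sim 1/(\cos\phi\,\del|\bm\xi|)$ exactly as in the proof of Theorem~\ref{thm:existence}, track how the $\beta$-dependent tail contributes, and verify that the resulting bound on $|b_\del(|\bm\xi|) - |\bm\xi||$ divided by the (also $\beta$-dependent) lower bound on $b_\del(|\bm\xi|)$ collapses cleanly to $\del^{\min\{2,1+\eta\}}|\bm\xi|^\eta$ for all $\eta \ge -\beta$.
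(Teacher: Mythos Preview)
Your proposal is correct and follows essentially the same route as the paper: Fourier diagonalization, then mode-by-mode bounds on $\bigl|\frac{1}{\lambda_\del(\bm\xi)}-\frac{1}{|\bm\xi|^2}\bigr|$ and $\bigl|\frac{1}{b_\del(|\bm\xi|)}-\frac{1}{|\bm\xi|}\bigr|$ via a case split on $a=\del|\bm\xi|$, with the high-frequency pressure case handled by the lower bound $b_\del \gtrsim |\bm\xi|^\beta/\del^{1-\beta}$ from Theorem~\ref{thm:existence}. Your observation that the projection tensors coincide exactly (since $\bfb_\del\parallel\bm\xi$ by \eqref{coef_b_2}) is in fact a bit cleaner than the paper's write-up, and the paper cites an external lemma for the $\lambda_\del$ estimate rather than rederiving it as you plan to.

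One small correction to your heuristic: the degradation from $\del^2$ to $\del^{1+\eta}$ in the pressure rate is \emph{not} caused by any higher moment of $\hat\omega$ failing to be finite (for $\beta\in(-1,1)$ all the moments $\int_0^1 r^{d+2k}\hat\omega(r)\,dr$ converge). The mechanism is purely the high-frequency lower bound: for $\del|\bm\xi|\gg 1$ one only has $b_\del\gtrsim |\bm\xi|^\beta/\del^{1-\beta}$ rather than $b_\del\gtrsim|\bm\xi|$, so $1/b_\del$ can be as large as $\del^{1-\beta}|\bm\xi|^{-\beta}$, and the condition $\eta\ge-\beta$ is exactly what is needed so that the factor $|\bm\xi|^{-\eta}$ from the $H^\eta$ norm absorbs this. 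Your stated computational plan (split at $\del|\bm\xi|\sim 1$, use the bounds from Theorem~\ref{thm:existence}) already implements this correctly.
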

 
\begin{proof} Let us work on the case $d=3$ as illustration.
We may again obtain from \eqref{matrix_inverse} the estimate of the Fourier coefficients:
\[
\begin{split}
 |\widehat{\bfu}(\bm\xi)-\widehat{\bfu}_\delta(\bm\xi) | &\leq  \left( \left|\frac{1}{ \lambda_\del(\bm\xi)}-\frac{1}{ |\bm\xi|^2}\right|+
  \left|\frac{\bfb_\del(\bm\xi)\otimes \bfb_\del(\bm\xi)}{\lambda_\del(\bm\xi) |\bfb_\del(\bm\xi)|^2} -\frac{ \bm\xi\otimes \bm\xi}{|\bm\xi|^4} \right| \right) | \widehat{\bff} (\bm\xi) | \\
  &\leq C    \left|\frac{1}{ \lambda_\del(\bm\xi)}-\frac{1}{ |\bm\xi|^2}\right| | \widehat{\bff} (\bm\xi) |
  \end{split}
\]
and
\[
|\widehat{p}(\bm\xi)-\widehat{p}_\delta(\bm\xi)|\leq C  \left|\frac{\bfb_\del(\bm\xi)}{|\bfb_\del(\bm\xi)|}-\frac{\bm\xi}{ |\bm\xi|^2}\right| | \widehat{\bff} (\bm\xi) | \,.
\]

Then \eqref{error_u} is  just a consequence of the following estimate of the difference between $1/\lambda_\del(\bm\xi)$ and $1/|\bm\xi|^2$,
which we can draw similar arguments from \cite[Lemma 1]{duyang16} to obtain:
\[
\left|\frac{1}{ \lambda_\del(\bm\xi)}-\frac{1}{ |\bm\xi|^2}\right| \leq C \del^2\,,
\]
where $C$ is a constant independent of $\del$ and $\bm \xi$.

To get the proof of \eqref{error_p},  we notice that  for $|\bm\xi|\geq1$,
\[
 \left|\frac{\bfb_\del(\bm\xi)}{|\bfb_\del(\bm\xi)|^2}-\frac{\bm\xi}{ |\bm\xi|^2}\right|   =
  \left|\frac{\bm\xi}{|\bm\xi|} \left( \frac{1}{b_\del(|\bm\xi|)}-\frac{1}{ |\bm\xi|}\right)\right| \leq  \left| \frac{1}{b_\del(|\bm\xi|)}-\frac{1}{ |\bm\xi|}\right|\,, 
\]
where $b_\del(|\bm\xi|)$ is given by \eqref{coef_b_scalar}.
Let $a=|\bm\xi|\del$, then
\[
\left| \frac{1}{ b_\del(|\bm\xi|)}-\frac{1}{ |\bm\xi|}\right|=
\del \left| \D \frac{1}{2\pi \int_0^{\pi/2}  \int_0^1 r^2 \hat\omega(r) \sin(2\phi) \sin(r \cos(\phi)a)drd\phi}-\frac{1}{ a}\right|\,.
\]
For $a<1$, since
\[
r \cos(\phi)a-\frac{(r \cos(\phi)a)^3}{3!}\leq \sin(r \cos(\phi)a)\leq r \cos(\phi)a\,,
\]
we obtain
\[
a-\frac{a^3}{3!}\leq 2\pi \int_0^{\pi/2} \sin(2\phi) \int_0^1 r^2 \hat\omega(r)\sin(r \cos(\phi)a)drd\phi \leq  a\,.
\]
So
\[
\frac{1}{\delta} \left| \frac{1}{ b_\del(|\bm\xi|)}-\frac{1}{ |\bm\xi|}\right|
%=\left| \frac{1}{2\pi \int_0^{\pi/2} \sin(2\phi) \int_0^1 r^2 \hat\omega(r)\sin(r \cos(\phi)a)drd\phi}-\frac{1}{ a}\right|
\leq  \frac{1}{a-\frac{a^3}{3!}}-\frac{1}{a}=\frac{a}{6-a^2}\leq a\,.
\]
So we have
\begin{equation}
\left| \frac{1}{b_\del(|\bm\xi|)}-\frac{1}{ |\bm\xi|}\right|\leq \delta a = \del^2 |\bm\xi|\,,
\end{equation}
which implies that 
\begin{equation} 
\left| \frac{1}{b_\del(|\bm\xi|)}-\frac{1}{ |\bm\xi|}\right| \frac{1}{|\bm\xi|^\eta}\leq C \del^{\min\{2, 1+\eta\}} |\bm\xi|^{\min\{0, 1-\eta\}} \leq C \del^{\min\{2, 1+\eta\}} \,,
\end{equation}
for the case $a=\del|\bm\xi|<1$.
For $a\geq1$, we proceed the same way as in the proof of Theorem \ref{thm:existence}  to obtain 
\[
b_\del(|\bm\xi|)
\geq\left\{
\begin{aligned}
&C |\bm\xi| \quad \text{ for } \del |\bm\xi | \in [1,4\pi/\ep] \\
& C \frac{|\bm\xi|^\beta}{\del^{1-\beta}} \quad \text{ for } \del |\bm\xi | \in (4\pi/\ep, \infty) \,.
\end{aligned}
\right.
\]
Then we have for $a=\del|\bm\xi|\in [1,4\pi/\ep]$,
\[
\left| \frac{1}{b_\del(|\bm\xi|)}-\frac{1}{ |\bm\xi|}\right| \frac{1}{|\bm\xi|^\eta}\leq C\frac{1}{|\bm\xi|^{1+\eta}}\leq C \del^{1+\eta}\,.
\]
And for the case  $a=\del|\bm\xi|\geq 4\pi/\ep$, we use the assumption that $\eta\geq - \beta$ to obtain
\[
\left| \frac{1}{b_\del(|\bm\xi|)}-\frac{1}{ |\bm\xi|}\right| \frac{1}{|\bm\xi|^\eta}\leq  C \frac{\del^{1-\beta}}{|\bm\xi|^{\beta+\eta}}+\frac{1}{|\bm\xi|^{1+\eta}} \leq C \del^{1-\beta+\beta+\eta}+\del^{1+\eta} \leq \tilde C \del^{1+\eta}\,.
\]
Combine the above arguments we arrive at \eqref{error_p}.  %\qed
 \end{proof}

\section{Numerical discretization} 
\label{sec:num}
With a well-posed nonlocal Stokes system \eqref{eq:stokes-nonlocal}, one may
readily consider its numerical discretization. We leave the discussion on its
particle approximation and the connection to the incompressible SPH to future works
due to the need for more lengthy derivations.
Instead, under periodic conditions, 
it is natural to consider Fourier spectral method for numerical approximation whose convergence can be
subject to the similar Fourier analysis.

Let $(\bfu_\delta^N, p_\delta^N)$ stands for the Fourier spectral approximation of $(\bfu_\delta, p_\delta)$.
It is easy to see that $(\bfu_\delta^N, p_\delta^N)$ are simply the truncation (projection) of $(\bfu_\delta, p_\delta)$
over all Fourier modes with wave numbers no larger than $N$. 
Hence, we get the following convergence result for a fixed $\delta>0$ as $N\to \infty$ for $\bff\in [L^2(\Od)]^3$
and also convergence rates for smoother data.

\begin{thm} \label{thm:fourierconverge-nl}
Let $(\bfu_\delta^N(\bx),\; p_\delta^N(\bx))$  be Fourier spectral approximation to the \eqref{eq:stokes-nonlocal}. 
We assume also that Assumptions~\ref{assu:frackernel} and \ref{kernel_rescale} hold true for the kernels. Then 
for $\bff\in [L^2(\Od)]^3\cap [H^{-\beta}(\Od)]^3$,  we have as $N\to \infty$, 
\begin{align}
  \|\bfu_\delta^N-\bfu_\delta \|_{[L^2(\Od)]^3}\to 0
\quad \text{ and } \quad
 \|p_\delta^N-p_\delta \|_{L^2(\Od)}\to 0\,.
 \end{align}
 Moreover,   
 if $\omega(r)$ 
satisfies \eqref{fracdiffusionkernel} for some $\al \in (0,1)$ and constants $m, M \in\R^+$,  then for any $s\geq 0$, 
with $C$ independent of $\del $, $N$, $\bff$ and $s$,  we have as $N\to \infty$, 
\begin{align}
  \|\bfu_\delta^N-\bfu_\delta \|_{[H^{\gamma+\al}(\Od)]^d} &\leq \frac{C}{N^s} \|\partial \bff \|_{[H^{\gamma-\al}(\Od)]^d}
\end{align}
and
\begin{align}
 \|p_\delta^N-p_\delta \|_{H^{\gamma}(\Od)}&\leq   \frac{C}{N^s}  \|\partial  \bff \|_{[H^{\gamma-\beta}(\Od)]^d}\,,
 \end{align}
  where $s>0$ denote the total order of differentiations of a partial differential operator $\partial$.
\end{thm}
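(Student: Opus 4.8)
The plan is to leverage the observation recorded just before the statement: the Fourier spectral solution coincides with the Fourier truncation of the exact nonlocal solution, so the whole analysis reduces to estimating the tail of a convergent series, with the decay rate supplied by the regularity bounds of Section~\ref{sec:wellpose}. More precisely, since Lemma~\ref{lem:fourier} diagonalizes $\mcL_\del$, $\mcG_\del$ and $\mcD_\del$ in the Fourier basis, I would first record that the Fourier--Galerkin problem defining $(\bfu_\del^N,p_\del^N)$ on the span of $\{e^{i\bm\xi\cdot\bx}:\bm\xi\in\Z^d\setminus\{0\},\ |\bm\xi|\le N\}$ decouples, mode by mode, into exactly the same $(d{+}1)\times(d{+}1)$ systems $A_\del(\bm\xi)\,(\widehat{\bfu}_\del^N(\bm\xi),\widehat{p}_\del^N(\bm\xi))^T=(\widehat{\bff}(\bm\xi),0)^T$ that determine $(\widehat{\bfu}_\del,\widehat{p}_\del)$ in Theorem~\ref{thm:existence}. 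Hence $\widehat{\bfu}_\del^N(\bm\xi)=\widehat{\bfu}_\del(\bm\xi)$, $\widehat{p}_\del^N(\bm\xi)=\widehat{p}_\del(\bm\xi)$ for $|\bm\xi|\le N$ and both vanish otherwise; equivalently $\bfu_\del^N$ and $p_\del^N$ are the Fourier truncations of $\bfu_\del$ and $p_\del$.

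For the plain convergence, note that for a fixed $\del>0$ the energy norm $\|\cdot\|_{[\cS_\del(\Od)]^d}$ dominates the $[L^2(\Od)]^d$ norm, since $\lambda_\del(\bm\xi)$ is bounded away from $0$ over $\bm\xi\in\Z^d\setminus\{0\}$ (as established in the proof of Proposition~\ref{thm:pickup}); thus \eqref{estimate_u} gives $\bfu_\del\in[L^2(\Od)]^3$ whenever $\bff\in[L^2(\Od)]^3$, and \eqref{estimate_p} gives $p_\del\in L^2(\Od)$ whenever $\bff\in[H^{-\beta}(\Od)]^3$. Then by the previous paragraph and Parseval,
\[
\|\bfu_\del-\bfu_\del^N\|_{[L^2(\Od)]^3}^2=\frac{1}{(2\pi)^d}\sum_{|\bm\xi|>N}|\widehat{\bfu}_\del(\bm\xi)|^2,\qquad
\|p_\del-p_\del^N\|_{L^2(\Od)}^2=\frac{1}{(2\pi)^d}\sum_{|\bm\xi|>N}|\widehat{p}_\del(\bm\xi)|^2,
\]
each a tail of a convergent series, hence both tend to $0$ as $N\to\infty$.

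For the rates under smoother data, assume the fractional lower bound \eqref{fracdiffusionkernel}. Corollary~\ref{thm:regularity}---which encodes both the commutation of the nonlocal operators with $\partial$ and the $\del$-independence of the constants in Proposition~\ref{thm:pickup}---yields, for a (possibly fractional) differential operator $\partial$ of order $s$ and any $\gamma\ge 0$, the uniform bounds $\|\bfu_\del\|_{[H^{\gamma+\al+s}(\Od)]^d}\le C\|\partial\bff\|_{[H^{\gamma-\al}(\Od)]^d}$ and $\|p_\del\|_{H^{\gamma+s}(\Od)}\le C\|\partial\bff\|_{[H^{\gamma-\beta}(\Od)]^d}$. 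Combining these with the first paragraph and the elementary inequality $(1+|\bm\xi|^2)^{\sigma}\le N^{-2s}(1+|\bm\xi|^2)^{\sigma+s}$, valid whenever $|\bm\xi|>N$, gives
\[
\|\bfu_\del-\bfu_\del^N\|_{[H^{\gamma+\al}(\Od)]^d}\le \frac{C}{N^{s}}\,\|\bfu_\del\|_{[H^{\gamma+\al+s}(\Od)]^d}\le \frac{C}{N^{s}}\,\|\partial\bff\|_{[H^{\gamma-\al}(\Od)]^d},
\]
and the same argument applied in $H^\gamma$ to the pressure component produces the claimed $O(N^{-s})$ rate for $\|p_\del-p_\del^N\|_{H^\gamma(\Od)}$. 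The argument is essentially bookkeeping once the truncation identity of the first paragraph is in hand; the only point demanding care is to verify that every constant inherited from Proposition~\ref{thm:pickup} and Corollary~\ref{thm:regularity} is genuinely uniform in $\del$ (so that the spectral rate does not deteriorate as $\del\to 0$) and in the order $s$, and to adopt---exactly as in Corollary~\ref{thm:regularity}---the convention that $\partial$ may be taken to be a fractional operator such as $(1-\Delta)^{s/2}$, so that $\|\partial\bff\|_{H^{\sigma}(\Od)}$ indeed controls $\|\bff\|_{H^{\sigma+s}(\Od)}$.
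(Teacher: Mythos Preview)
Your proposal is correct and follows essentially the same route as the paper: the paper's proof is the single sentence ``The results follow from standard Fourier analysis and the regularity estimates in Corollary~\ref{thm:regularity},'' and what you have written is precisely a careful unpacking of that sentence---the truncation identity for $(\bfu_\del^N,p_\del^N)$, the tail-of-a-convergent-series argument for plain convergence, and the combination of Corollary~\ref{thm:regularity} with the standard $N^{-s}$ truncation bound for the rates.
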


\begin{proof}
The results follow from standard Fourier analysis and the regularity estimates in Corollary \ref{thm:regularity}. %\qed
\end{proof}

\begin{remark}
By Corollary \ref{coro:equiv}, we can also see that the discrete Fourier spectral approximation automatically leads to a divergence-free vector fields under the Assumptions~\ref{assu:frackernel} and \ref{kernel_rescale} on the kernels, simultaneously in the local and nonlocal sense.
\end{remark}

Although nonlocal models may be of interests in their own right, given that they have been used as integral relaxations to the local models, we would like to study the convergence properties of nonlocal discrete solutions to solutions of
the corresponding  local continuum models. Along this direction, 
we would like to emphasize the fact that the Fourier spectral approximation for \eqref{eq:stokes-nonlocal} is 
asymptotically compatible (a notion developed in \cite{TiDu14}), in the sense that it is not only a convergent numerical method for the nonlocal
problem with fixed $\del$, but also preserves the  compatibility to the asymptotic local limit as $\del$ shrinks to zero.
Having the asymptotic compatibility provides robustness to the numerical discretization since the numerical
solution in various parameter regimes (that involving both the smoothing length and the spatial discretization parameter)
are expected to converge to the desired continuum limit with increased numerical resolution.
 In mathematical terms, one 
expects the following to be true:
\beq
\left\{
\begin{array}{c}
\| \bfu_\delta^N - \bfu \| \to 0,\\
\text{ and }\\
\|p_\delta^N-p \|\to0,
\end{array}
\right.\quad
 \text{ as }\quad
 \left\{
\begin{array}{c}
 \del\to0,\\
 \text{ and }\\
  N\to\infty\,.
 \end{array}
\right. 
\eeq
By the projection property. we have
\beq \label{proju}
\| \bfu_\delta^N - \bfu^N \| \leq
\| \bfu_\delta - \bfu \|
\eeq
and
\beq \label{projp}
 \| p_\delta^N - p^N \| \leq
  \| p_\delta - p \|\,.
  \eeq
Thus, one way to derive the convergence to the local limit is through the triangle inequalities:
\beq \label{triangle_ineq}
\begin{aligned}
\| \bfu_\delta^N - \bfu \| & \leq \| \bfu_\delta^N - \bfu^N \|+ \| \bfu^N - \bfu \| \\
& \leq \| \bfu_\delta - \bfu \|+ \| \bfu^N - \bfu \| 
\end{aligned}
\eeq
and
\beq \label{triangle_ineq1}
\begin{aligned}
 \| p_\delta^N - p \| & \leq \| p_\delta^N - p^N \|+ \| p^N - p \|\\
  & \leq \| p_\delta - p \|+ \| p^N - p \|
\end{aligned}
\eeq
where $(\bfu^N, p^N)$ denotes the Fourier approximation for the standard Stokes equation, which converges to $(\bfu, p)$ as $N\to\infty$. 

Now to visualize the asymptotic compatibility of the Fourier spectral approximation, we present
a diagram in Fig \ref{fig:diagram} showing the different paths of convergence, following the work of \cite{TiDu14}.

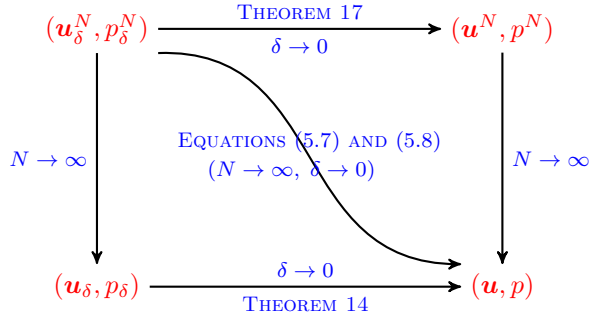
\begin{figure}[htbp]
  \centering
       \begin{tikzpicture}[scale=0.98]
     \tikzset{to/.style={->,>=stealth',line width=.8pt}}   
 \node(v1) at (0,3.5) {\textcolor{red}{$(\bfu_\delta^N, p_\delta^N)$}};
  \node (v2) at (5.5,3.5) {\textcolor{red}{$(\bfu^N, p^N)$}};
   \node (v3) at (0,0) {\textcolor{red}{$(\bfu_\delta, p_\delta)$}};
    \node (v4) at (5.5,0) {\textcolor{red}{$(\bfu, p)$}};
     \draw[to] (v1.east) -- node[midway,above] {\footnotesize{\textcolor{blue}{\sc  Theorem \ref{thm:fourierconverge}}}}  node[midway,below] {\footnotesize{\textcolor{blue}{$\delta\to0$}}}      
     (v2.west);
     \draw[to] (v1.south) -- node[midway,left] {\footnotesize{\textcolor{blue}{$N\to\infty$}}} (v3.north);
       \draw[to] (v3.east) -- node[midway,below] {\footnotesize{\textcolor{blue}{\sc Theorem \ref{thm:converge}}}} node[midway,above] {\footnotesize{\textcolor{blue}{$\del\to0$}}} (v4.west);
       \draw[to] (v2.south) -- node[midway,right] {\footnotesize{\textcolor{blue}{$N\to\infty$}}}(v4.north);
     \draw[to] (v1.south east) to[out = 2, in = 180, looseness = 1.2] node[midway]  {\footnotesize\hbox{\shortstack[l]{ {\textcolor{blue}{\sc  Equations \eqref{triangle_ineq} and \eqref{triangle_ineq1}}}\\ { \quad \textcolor{blue}{($N\to\infty,\;\del\to0$)}} }}} (v4.north west);

      \end{tikzpicture}
   \caption{A diagram for asymptotic compatibility and convergence results.}\label{fig:diagram}
\end{figure}

\begin{thm} \label{thm:fourierconverge}
Suppose $(\bfu_\delta^N(\bx),\; p_\delta^N(\bx))$ and $(\bfu^N(\bx),\; p^N(\bx))$ are Fourier spectral approximations to the \eqref{eq:stokes-nonlocal} and \eqref{eq:stokes} respectively. 
We assume also that Assumptions~\ref{assu:frackernel} 
and \ref{kernel_rescale}
 hold true for the kernels. Then with $C$ independent 
of $\del $, $N$ and $\bff$, we have
\begin{align}
  \|\bfu_\delta^N-\bfu^N\|_{[L^2(\Od)]^3}&\leq C\del^2 \|\bff^N\|_{[L^2(\Od)]^3}\,,
\end{align}
and
\begin{align}
 \|p_\delta^N-p^N\|_{L^2(\Od)}&\leq  C\delta^{\min\{2, 1+\eta\}}  \|\bff^N\|_{[H^{\eta}(\Od)]^3} \quad \text{for }\eta\geq -\beta \,,
 \end{align}
 where $\beta\in(-1,1)$ is the exponent defined through \eqref{frackernel}.
\end{thm}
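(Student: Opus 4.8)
The plan is to exploit that every operator appearing in \eqref{eq:stokes-nonlocal} and \eqref{eq:stokes} is a Fourier multiplier, so the spectral approximations are nothing but mode truncations. Since $\mcL_\del$, $\mcG_\del$, $\mcD_\del$ and their local counterparts are diagonal in the basis $\{e^{i\bm\xi\cdot\bx}\}$, the Fourier coefficients of $\bfu_\delta^N$ and $p_\delta^N$ coincide, for $0<|\bm\xi|\le N$, with those given by \eqref{eq:fouriercoef}, and vanish for $|\bm\xi|>N$; similarly $\widehat{\bfu^N}(\bm\xi)$ and $\widehat{p^N}(\bm\xi)$ come from the same formula with $\lambda_\del(\bm\xi)$ and $\bfb_\del(\bm\xi)$ replaced by the local symbols $|\bm\xi|^2$ and $\bm\xi$, again truncated at $|\bm\xi|=N$. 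Hence $\widehat{\bfu_\delta^N-\bfu^N}(\bm\xi)=\widehat{\bfu}_\delta(\bm\xi)-\widehat{\bfu}(\bm\xi)$ and $\widehat{p_\delta^N-p^N}(\bm\xi)=\widehat{p}_\delta(\bm\xi)-\widehat{p}(\bm\xi)$ for $0<|\bm\xi|\le N$, and both are zero otherwise.

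Next I would reuse the pointwise (in $\bm\xi$) estimates already derived inside the proof of Theorem \ref{thm:converge}: from \eqref{matrix_inverse} and the bound $\left|\frac{1}{\lambda_\del(\bm\xi)}-\frac{1}{|\bm\xi|^2}\right|\le C\del^2$ one gets $|\widehat{\bfu}(\bm\xi)-\widehat{\bfu}_\delta(\bm\xi)|\le C\del^2|\widehat{\bff}(\bm\xi)|$, and from $\left|\frac{1}{b_\del(|\bm\xi|)}-\frac{1}{|\bm\xi|}\right|\le C\del^{\min\{2,1+\eta\}}|\bm\xi|^{\eta}$ (valid whenever $\eta\ge-\beta$) one gets $|\widehat{p}(\bm\xi)-\widehat{p}_\delta(\bm\xi)|\le C\del^{\min\{2,1+\eta\}}|\bm\xi|^{\eta}|\widehat{\bff}(\bm\xi)|$, with $C$ independent of $\del$ and $\bm\xi$. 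Summing by Parseval over the retained modes,
\[
\|\bfu_\delta^N-\bfu^N\|_{[L^2(\Od)]^3}^2=\sum_{0<|\bm\xi|\le N}\left|\widehat{\bfu}_\delta(\bm\xi)-\widehat{\bfu}(\bm\xi)\right|^2\le C\del^4\sum_{0<|\bm\xi|\le N}|\widehat{\bff}(\bm\xi)|^2=C\del^4\|\bff^N\|_{[L^2(\Od)]^3}^2,
\]
and, similarly, $\|p_\delta^N-p^N\|_{L^2(\Od)}^2\le C\del^{2\min\{2,1+\eta\}}\sum_{0<|\bm\xi|\le N}|\bm\xi|^{2\eta}|\widehat{\bff}(\bm\xi)|^2=C\del^{2\min\{2,1+\eta\}}\|\bff^N\|_{[H^{\eta}(\Od)]^3}^2$. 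Taking square roots gives the two asserted inequalities, the constants being those inherited from Theorem \ref{thm:converge} and therefore independent of $\del$, $N$ and $\bff$.

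There is no genuine obstacle here: the statement is just the finite-mode version of Theorem \ref{thm:converge}. The only subtlety is keeping track of the right-hand side — one must bound by the norm of the \emph{truncated} datum $\bff^N$, not of $\bff$. The cruder route through the projection inequality \eqref{proju}, i.e. $\|\bfu_\delta^N-\bfu^N\|\le\|\bfu_\delta-\bfu\|$, together with Theorem \ref{thm:converge} would only yield $C\del^2\|\bff\|$; obtaining the sharper $\|\bff^N\|$ is exactly why the coefficientwise computation is used, the key fact being that neither $(\bfu_\delta^N,p_\delta^N)$ nor $(\bfu^N,p^N)$ carries any Fourier content beyond wave number $N$.
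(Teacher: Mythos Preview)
Your proof is correct and is essentially the approach the paper has in mind: the paper's own proof is the one-liner ``The result is immediate from \eqref{proju} and \eqref{projp} and Theorem \ref{thm:converge},'' and your argument simply unpacks this at the level of Fourier coefficients.

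Your closing remark is well taken, though. Chaining \eqref{proju} literally with Theorem \ref{thm:converge} yields only $C\del^2\|\bff\|$ on the right, not the $C\del^2\|\bff^N\|$ that the theorem actually claims. Your mode-wise computation recovers the sharper truncated right-hand side; equivalently, one may read the paper's proof as applying Theorem \ref{thm:converge} directly to the truncated datum $\bff^N$ (for which $(\bfu_\delta^N,p_\delta^N)$ and $(\bfu^N,p^N)$ are the \emph{exact} nonlocal and local Stokes solutions, by the Fourier-multiplier structure), which is precisely your argument in compressed form. Either way the distinction is cosmetic since $\|\bff^N\|\le\|\bff\|$, but your version matches the stated theorem exactly.
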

\begin{proof}
The result is immediate from \eqref{proju} and \eqref{projp} and Theorem \ref{thm:converge}. %\qed
\end{proof}

While the focus of this work is mainly on theoretical analysis, the asymptotic compatibility given in the Theorem
\ref{thm:fourierconverge} reveals interesting possibilities to design numerical discretization of the local Stokes
equation via nonlocal integral relaxations without imposing the smoothing length $\delta$ to be proportional to  $h=1/N$ with
$N$ representing the discretization parameter (or $h$ representing the scale of numerical resolution).
The latter, with $h$ being the parameter for typical particle spacing, is a common practice in methods like SPH. 
Relaxing such constraints can potentially lead to
more effective and robust approximations especially when simulating complex flow patterns that require
more adaptive choices of smoothing and discretization. Finding convergent approximation for particle like
approximation to the integral formulations for more general $h$ and $\delta$ has been systematically
studied and successfully explored computationally 
\cite{ELC02,SRS10}, though the focus there were  corrections on the discrete level to assure reproducing conditions, which
has been a popular approach developed in meshfree approximation literature.
Further theoretical investigations on the connections of these related ideas will be carried out in future works.

\section{Conclusion and discussions}
\label{sec:discu}
Recent development of nonlocal continuum models and nonlocal calculus has provided us 
useful tools to better understand models and numerical methods that may involve nonlocality, either
on the physical level or for convenience of numerical computation. A number of studies have been
carried out for solid mechanics in the context of peridynamics.  {This work 
is an attempt to extend the mathematical study of nonlocal models to fluid mechanics. It is mainly aimed at 
providing new theoretical insight to popular methods for simulating fluid flows such as SPH and vortex-blob methods.
The latter two approaches have the notion of nonlocality and integral relaxations explicitly built in their formulations. }
In addition to being
a computational technique, the introduction of nonlocality can also arise due to physical considerations
such as the nonlocal memory effect in viscoelastic fluids and the nonlocal spatial effect in quasi-geostrophic flows.
{Indeed, the message we want to deliver in this paper is that one should perhaps first study the continuum relaxation of the 
PDEs more systematically before designing consistent, stable and robust numerical discretization.  
It is thus a meaningful mathematical exercise to consider a well-posed nonlocal analog of the
local continuum equations as one foundation  block for understanding  and improving the relevant numerical methods.}
The setting presented here is simplified to the case of nonlocal Stokes equation with periodic boundary conditions so that we can first probe the choices of  appropriate nonlocal operators without delving into other technical difficulties.
When replacing the local differential operators by their nonlocal counterparts such as those introduced in previous works \cite{du13,MeDu15}, 
we see that one should be particularly careful about the interaction kernels used in  nonlocal gradient and divergence operators, in the sense that 
the interaction should be sufficiently strengthened to make the system stay solvable and stable. 
The condition imposed, as elucidated in the work, is a natural one that makes the spaces of local and nonlocal divergence free
vector fields equivalent. The resulting models with proper nonlocal
interaction kernels are not only well-posed for any finite horizon and smoothing length, but their solutions also converge to the solution of
classical Stokes  equation in the local limit, a fact established in this work along with precise convergence rates. 

The nonlocal Stokes equation, as a continuum model, serves as a bridge between the local
Stokes equation and its discretizations like SPH. In building such a linkage, 
the notion of asymptotically compatible schemes shown in Fig \ref{fig:diagram} can become important for practical applications
due to the implied robustness of the underlying numerical methods so 
that one does not necessarily need  the discretization to be refined faster than the reduction of smoothing length. 
In particular, the Fourier spectral method is shown to enjoy asymptotic compatibility.  It is certainly more interesting to look into other numerical methods, 
in particular, particle discretizations like SPH, which is a main objective of our ongoing series of 
works. Although no extended investigations are made here on either mesh 
or particle based discretization, some preliminary speculations can be offered. {For example, while we leave more
detailed studies to subsequent works, 
it is no surprising that the well-posedness of the continuum nonlocal Stokes equation does not guarantee that simple minded discretization is automatically stable. 
As a comparison, it is well-known that to solve the conventional local Stokes equation based on standard centered finite differences on a Cartesian mesh, 
check-board type instabilities may arise when the unknown velocity and pressure are placed at the same set of mesh points. Instead, 
 the so-called MAC (Marker and Cell) type  schemes are developed to
 place the unknown variables at staggered locations. Such issues can be studied in the nonlocal setting, very much in the same spirit
 of the work presented here. }
 For an brief illustration, consider the one dimensional 
 % One expects to have similar stories for the nonlocal Stokes equation. 
% The point can be
% briefly illustrated in simple terms in one space dimension. Indeed, suppose we have a single mesh for $\{ x_j\} $ on $(-\pi, \pi)$ for $u$ and $p$, and the 
nonlocal gradient operator  given by
\[
\mcG_\delta p(x)= \int_0^\del  \hat\omega_\del(s) (p(x+s) - p(x-s)) ds\,,
\]
we have naturally two types of finite difference approximation, one with a regular uniform grid and another with a staggered grid that are given in the following respectively: 
\begin{align*}
&\text{Regular: } \quad \mcG^h_\delta p(x_j)=\sum_{k=0}^r  d_k \left( p(x_{j+k})- p(x_{j-k})\right)\\
&\text{Staggered: } \quad \mcG^h_\delta p(x_j)=\sum_{k=0}^r  d_k \left( p(x_{j+k+\frac{1}{2}})- p(x_{j-k-\frac{1}{2}})\right)
\end{align*}
for some nonnegative coefficients $\{d_k\}$. Again by Fourier analysis, we may find that the eigenvalues of the two discrete operators are:
\[
i b_{\del,h} (n) = \left\{
\begin{aligned} 
&i 2 \sum_{k=0}^r d_k \sin(n k h) \quad \text{for regular grid} \\ 
& i 2 \sum_{k=0}^r d_k \sin(nkh+{nh}/{2})  \quad \text{for staggered grid}
\end{aligned}
\right.
\]
where $h={2\pi}/{N}$ and $n=-N+1, \cdots, N$.   One can observe that 
for the regular grid the discrete eigenvalue is zero if $n=N$ while it is generically not the case for the staggered case.
The story of numerical stability is then different for the two discrete operators, see \cite{dlee19} for additional discussions on
multidimensional cases. 
%So we find that even in the continuum setting we impose singularity on the kernel $\hat\omega_\del$ to make sure that the eigenvalues of $\mcG_\delta$ stay nonzero, 
%we still encounter with zero discrete eigenvalue by choosing $n=N$ in the above expression. Thus the design of a stable finite difference scheme is again important for solving the nonlocal Stokes equation. 
%We proposed and studied the nonlocal Stokes equation, but there many unsolved questions that can be studied in the future along this line.  

Finally, there are various possible extensions of the work here. Our nonlocal formulation here is based on a centered nonlocal relaxation to
local differential operators. For example, for $\mcG_\delta$ in 
\eqref{eq:op-nonlocal} , it is determined by a vector $\vod(\by-\bx)$ which can be viewed as an odd and rank-one tensor. A more general form of such
a nonlocal gradient operator acting on a vector field $\bfu$, by the Schwartz kernel theorem, can be written as a second order tensor \cite{MeDu15} given by
\[
\mcG_\del{\bfu}(\bx   ):=  \int \rho_\delta(|\by -\bx   |)\,\mathsf{M}_\delta(\by -\bx   ) \, \frac{{\bfu}(\by )-{\bfu}(\bx   )}{|\by -\bx   |} \;d\by ,\;\;
\]
where  $\mathsf{M}_\delta$  is a 3rd-order odd tensor and the integral is interpreted in the principal sense.
 There are also other forms of nonlocal operators such as those based on non-symmetric or one-sided interaction kernels.
 For example, the one dimensional forward/backward nonlocal differentiation operators
  \[
\mcG_\delta^+ p(x)= \int_0^\del  \rho_\del(s) (p(x+s) - p(x)) ds\,,
\quad
\mcG_\delta^- p(x)= \int_0^\del  \rho_\del(s) (p(x) - p(x-s)) ds\,,
\]
 have been studied in \cite{dyz16} that  can
 lead to invertible operators for a wider class of nonlocal interaction kernels $\rho_\del=\rho_\del(s)$ with orientation 
 bias \cite{dlee19}.  Moreover, one may consider formulations involving stabilization terms by introducing some nonlocal analog of artificial compressibility similar
to  ideas used for conventional local Stokes models \cite{zds19}.
Studying the nonlocal analog of time-dependent nonlinear Navier-Stokes system is surely
another step to take \cite{dlee19}. Extensions can also be considered for compressible flows, interfacial and multiphase flows, magneto-hydrodynamics
(MHD) and stochastically
driven flows. The extension in the case of a scalar hyperbolic conservation laws can be found in \cite{ld19}.
For SPH, much effort has also been devoted to the accurate treatment of boundary conditions, even though particle like meshfree methods are thought
to be able to handle complex geometry and boundary conditions more effectively. This is perhaps not surprising, given the
intrinsic nonlocal continuum formulations explicitly formulated here that demand generically the notion of nonlocal boundary conditions
or volumetric constraints (see \cite{du12,MeDu15}). {It leads to
another important  topic of future research.  Naturally, once the model and discretization are in place, there are still many practical  issues ranging from quadratures to linear and nonlinear solvers that must also be investigated along with more careful theoretical analysis.}

\vspace{0.2cm}

%%%%%%%%%%%%%%%%%%%%%COMMENT%%%%%%%%%%%%%%%%%%%%%%%%%%%%%%%

\noindent{\bf Acknowledgement}: The authors would like to thank  R. Lehoucq, J. Foster and  A.  Tartakovsky for discussions on related subjects.

\end{document}